\newcommand{\rrvert}{\vert}
\newcommand{\llvert}{\vert}
\newcommand{\underset}[3]{\mathop{#2}_{#1}^{#3}}
\newcommand{\E}{\mathbb E}
\newcommand{\R}{\mathbb{R}}
\newcommand{\N}{\mathbb{N}}
\newcommand{\C}{\mathbb{C}}
\newcommand{\Z}{\mathbb{Z}}
\renewcommand{\P}{\mathbb{P}}
\newcommand{\ZZZ}{\mathcal{Z}}
\newcommand{\RRR}{\mathcal{R}}
\newcommand{\supp}{\operatorname{supp}}
\newcommand{\sgn}{\operatorname{sgn}}
\newcommand{\MMM}{\mathfrak{M}}
\newcommand{\Mb}{\mathbb{M}}
\newcommand{\NNN}{\mathfrak{N}}
\newcommand{\KKK}{\mathfrak{K}}
\newcommand{\PPP}{\mathfrak{P}}
\newcommand{\QQQ}{\mathfrak{Q}}
\newcommand{\CCC}{\mathfrak{C}}
\newcommand{\eps}{\varepsilon}
\newcommand{\todistr}{\underset{n\to\infty}{\longrightarrow}{d}}
\newcommand{\toweak}{\underset{n\to\infty}{\longrightarrow}{w}}
\newcommand{\toprobab}{\underset{n\to\infty}{\longrightarrow}{P}}
\newcommand{\toas}{\underset{n\to\infty}{\longrightarrow}{\mathrm{a.s.}}}
\newcommand{\ton}{\underset{n\to\infty}{\longrightarrow}{}}
\newcommand{\bsl}{\setminus}
\newcommand{\ind}{\mathbh{1}}
\newtheorem{theorem}{Theorem}[section]
\newtheorem{lemma}[theorem]{Lemma}
\newtheorem{corollary}[theorem]{Corollary}
\newtheorem{proposition}[theorem]{Proposition}
\begin{document}
\begin{frontmatter}

\title{Roots of random polynomials whose coefficients have logarithmic tails}
\runtitle{Roots of random polynomials}

\begin{aug}
\author[A]{\fnms{Zakhar} \snm{Kabluchko}\corref{}\ead[label=e1]{zakhar.kabluchko@uni-ulm.de}}
\and
\author[B]{\fnms{Dmitry} \snm{Zaporozhets}\ead[label=e2]{zap1979@gmail.com}}
\runauthor{Z. Kabluchko and D. Zaporozhets}
\affiliation{Ulm University and Russian Academy of Sciences}
\address[A]{Institute of Stochastics\\
Ulm University\\
Helmholtzstr. 18\\
89069 Ulm\\
Germany\\
\printead{e1}} 
\address[B]{St. Petersburg Branch \\
Steklov Institute of Mathematics\\
Fontanka Str. 27\\
191011 St. Petersburg\\
Russia\\
\printead{e2}}
\end{aug}

\received{\smonth{10} \syear{2011}}

%
\begin{abstract}
It has been shown by Ibragimov and Zaporozhets [In \textit{Prokhorov
and Contemporary Probability Theory} (2013) Springer] that the complex
roots of a random polynomial $G_n(z)=\sum_{k=0}^n \xi_k z^k$ with
i.i.d. coefficients $\xi_0,\ldots,\xi_n$ concentrate a.s. near the unit
circle as $n\to\infty$ if and only if ${\E\log_{+}} |\xi_0|<\infty$. We
study the transition from concentration to deconcentration of roots by
considering coefficients with tails behaving like $L({\log}|t|)({\log}
|t|)^{-\alpha}$ as $t\to\infty$, where $\alpha\geq0$, and $L$ is a
slowly varying function. Under this assumption, the structure of
complex and real roots of $G_n$ is described in terms of the least
concave majorant\vspace*{1pt} of the Poisson point process on
$[0,1]\times (0,\infty)$ with intensity $\alpha v^{-(\alpha+1)}
\,du\,dv$.
\end{abstract}

%
\begin{keyword}[class=AMS]
\kwd[Primary ]{26C10}
\kwd[; secondary ]{30B20}
\kwd{60G70}
\kwd{60B10}
\kwd{60F10}
\end{keyword}
\begin{keyword}
\kwd{Random polynomials}
\kwd{distribution of roots}
\kwd{weak convergence}
\kwd{heavy tails}
\kwd{least concave majorant}
\kwd{extreme value theory}
\end{keyword}

\end{frontmatter}

\section{Introduction and statement of results}\label{secintro}
\subsection{Introduction}\label{subsecintro}
Let $\xi_0,\xi_1,\ldots$ be i.i.d. nondegenerate random variables
with values in $\C$. Let $\ZZZ_n$ be the collection of complex roots
(counted according to their multiplicities) of the random polynomial
%
%
\begin{equation}
G_n(z)=\sum_{k=0}^n
\xi_k z^k.
\end{equation}
For $0\leq a\leq b$ denote by $R_n(a,b)$ the number of roots of $G_n$
in the ring $\{z\in\C\dvtx a\leq|z|\leq b\}$.
Improving on a result of {\v{S}}paro and {\v{S}}ur \cite{sparosur},
Ibragimov and Zaporozhets \cite{izlog} show that
%
%
\begin{equation}
\frac1n R_n(1-\eps,1+\eps) \toas1
\end{equation}
for every $\eps\in(0,1)$, if and only if
%
%
\begin{equation}
\label{eqlogmoment} {\E\log_+} |\xi_0|<\infty.
\end{equation}
Here, $\log_+ x=\max(\log x, 0)$. Without any assumptions on the
distribution of $\xi_0$, Ibragimov and Zaporozhets \cite{izlog} also
prove that for every $\alpha,\beta$ such that $0\leq\alpha<\beta
\leq2\pi$,
%
%
\begin{equation}
\frac1n\sum_{z\in\ZZZ_n} \ind_{\{\alpha\leq\arg z\leq\beta\}} \toas
\frac{\beta-\alpha}{2\pi}.
\end{equation}
Thus, under a very mild moment condition, the complex roots of $G_n$
concentrate near the unit circle uniformly by the argument as $n\to
\infty$.

Imposing additional conditions on the distribution of $\xi_0$ it is
possible to obtain more precise information about the asymptotic
concentration of the roots near the unit circle. In the case when $\xi
_0$ belongs to the domain of attraction of an $\alpha$-stable law,
$\alpha\in(0,2]$, Ibragimov and Zeitouni \cite{ibrzeit} show that
for every $t>0$,
%
%
\begin{equation}
\label{eqibrzeit} \lim_{n\to\infty}\frac1n \E R_n \biggl(1-
\frac{t}{n},1+\frac
{t}{n} \biggr)= \frac{1+e^{-\alpha t}}{1-e^{-\alpha t}}-\frac2{\alpha t}.
\end{equation}
This is a generalization of the result of Shepp and Vanderbei
\cite{sheppvanderbei} who consider real-valued standard Gaussian coefficients.

On the other hand, if ${\E\log_+} |\xi_0|=\infty$ and thus there is
no concentration near the unit circle, it is also possible to describe
the asymptotic behavior of the roots when the tail of $|\xi_0|$ is
extremely heavy. G{\"o}tze and Zaporozhets \cite
{goetzezaporozhets} prove that if the distribution of ${\log_+\log_+}|\xi
_0|$ has a slowly varying tail, then the complex roots of $G_n$
concentrate in probability on two circles centered at the origin whose
radii tend to zero and infinity, respectively. See also \cite{z,zn} for
more results in the case of extremely heavy tails.

Up to now, the behavior of the roots has been unknown when the tail of
$\xi_0$ is somewhere between the two cases described above. The aim of
this paper is to consider a class of distributions which in some sense
continuously links the above cases. We will consider coefficients with
logarithmic power-law tails. More precisely,
we make the following assumption: for some $\alpha\geq0$,
%
%
\begin{equation}
\label{eqtail1} 
\quad \bar F(t):=\P\bigl[{\log}|\xi_0|>t\bigr]
\qquad\mbox{is regularly varying at } {+}\infty\mbox{ with index } {-}\alpha.
\end{equation}
This class of distributions includes distributions with both finite
($\alpha>1$) and infinite ($\alpha<1$) logarithmic moments. We will
obtain a precise information on how the concentration of the roots near
the unit circle becomes destroyed as $\alpha$ approaches~$1$ from above
and how the roots behave when there is no concentration ($\alpha<1$).

The case $\alpha=+\infty$ corresponds formally to the light or
power-law tails studied in \cite{sheppvanderbei,ibrzeit}.
The roots are concentrated near the unit circle and, apart from this,
no global organization is apparent. 
We will prove that as $\alpha$ becomes finite, the distribution of
roots becomes highly organized; see Figure~\ref{figroots}. The roots
``freeze'' on a random set of circles centered at the origin. Both the
radii of the circles and the distribution of the roots among the
circles are random; however, the distribution of the roots on each
circle is uniform by the argument. As long as $\alpha$ stays above~$1$,
the logarithmic moment is finite, and the circles approach the unit
circle at rate $n^{1/\alpha-1}$ (ignoring a slowly varying term), in
full agreement with the result of \cite{izlog}. Note also that for
$\alpha$ close to $+\infty$, this rate is close to the rate $1/n$
appearing in (\ref{eqibrzeit}). As $\alpha$ becomes equal to $1$, we
have a transition from finite to infinite logarithmic moment. We will
show that if $\bar F(t)\sim c/t$ as $t\to+\infty$, then the empirical
measure formed by the roots of $G_n$ converges weakly (without
normalization) to a random probability measure concentrated on an
infinite number of circles with random radii. For the first time, the
roots are not concentrated near the unit circle. As $\alpha$ becomes
smaller than $1$, the circles divide into two groups approaching $0$
and $\infty$ at the rates $\pm n^{1/\alpha -1}$, on the logarithmic
scale. The number of circles, which was infinite for $\alpha\geq1$,
becomes finite for $\alpha<1$ and decreases to $2$ as $\alpha\to0$. At
$\alpha=0$ the roots freeze on just $2$ circles located very close to
$0$ and $\infty$, in accordance with G\"otze and Zaporozhets
\cite{goetzezaporozhets}, whose results we will strengthen. At
$\alpha=0$, the empirical measure formed by the roots becomes almost
deterministic: the only parameter which remains random after taking the
limit $n\to\infty$ is the proportion of the roots close to $0$ (or to
$\infty$), which is uniform on $[0,1]$.

%
%
\begin{figure}

\includegraphics{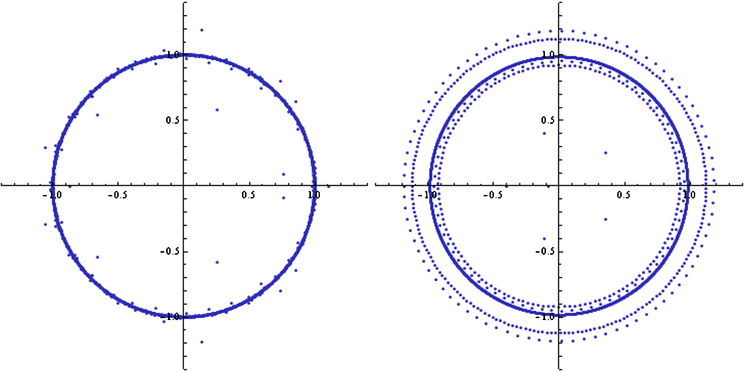}

\caption{Roots of a random polynomial of degree $n=2000$ whose (real)
coefficients are (left) standard normal, (right) such that $\P[\log\xi
_0>t]=1/t^{2}$ for $t\geq1$.}\label{figroots}
\end{figure}
%

An interesting phenomenon we will encounter is the appearance of the
long-range dependence between the roots under condition (\ref
{eqtail1}). Consider a random polynomial $G_n$ of high degree, and
suppose that we know that it has a root at some point $z_0\in\C$. In
the case of coefficients from the domain of attraction of a stable law,
this information has almost no influence on the other roots of $G_n$,
except for the roots located in an infinitesimal neighborhood of $z_0$.
However, for coefficients with logarithmic power-law tails, the
knowledge about the existence of a root at $z_0$ implies that there
exists (with high probability) a circle of roots containing $z_0$.
Moreover, the radii of the other circles of roots are influenced by the
existence of the root at $z_0$. We observe a long-range dependence
between the roots: the conditional distribution of roots, given that
there is a root at $z_0$, differs, even on the global scale, from the
unconditional distribution of roots.

If the random variables $\xi_i$ are real-valued, we will also analyze
the real roots of~$G_n$. 
For a particular family of distributions satisfying (\ref{eqtail1})
with $\alpha>1$, Shepp and Farahmand \cite{sheppfarahmand}
show that the expected number of real roots of $G_n$ is asymptotically
$c(\alpha)\log n$ with $c(\alpha)=\frac{2\alpha-2}{2\alpha-1}$. As
$\alpha$ decreases from $+\infty$
to $1$ the function $c(\alpha)$ decreases from $1$ to $0$.
We will complement this result by showing that for $\alpha\in(0,1)$,
the number of real roots of $G_{n}$ has two subsequential
distributional limits as $n\to\infty$ along the subsequence of
even/odd integers. This means that for $\alpha\in(0,1)$ the
polynomial $G_n$ has, roughly speaking, $O(1)$ real roots. Finally, we
will prove that for $\alpha=0$, the number of real roots of $G_n$ can
take asymptotically only the values $0,\ldots,4$ and compute the
probabilities of these values.


\subsection{Complex roots}\label{subseccomplexroots}
Given a complex number $z=|z|e^{i \arg z}$ and $a\in\R$, we write
\[
z^{\langle a\rangle}=|z|^{a}e^{i\arg z}.
\]
%
The next theorem describes the structure of complex roots of $G_n$. Let
$\delta(z)$ be the unit point mass at $z$. Denote by $\bar\C=\C\cup
\{\infty\}$ the Riemann sphere.
We need normalizing sequences $a_n,b_n$ such that
%
\begin{equation}
\label{eqdefan} \bar F(a_n)\sim\frac1n \qquad\mbox{as } n\to\infty,
\qquad b_n=\frac n{a_n}.
\end{equation}

%
\begin{theorem}\label{theocomplex}
If the tail condition (\ref{eqtail1}) is satisfied with some $\alpha
>0$, then we have the following weak convergence of random probability
measures on~$\bar\C$:
%
\[
\frac1 n\sum_{z\in\ZZZ_n} \delta\bigl({z^{\langle b_n \rangle
}}
\bigr)\toweak\Pi_{\alpha}.
\]
The limiting random probability measure $\Pi_{\alpha}$ is a.s. a
convex combination of at most countably many uniform measures
concentrated on circles centered at the origin.
\end{theorem}
%
%
\begin{remark}[(On convergence of random measures)]
Let $E$ be a locally compact metric space. Denote by $\Mb(E)$ the
space of locally\vadjust{\goodbreak} finite Borel measures on~$E$. Endowed with the
topology of vague convergence, $\Mb(E)$ becomes a Polish space;
see \cite{resnickbook}, Section 3.4. A \textit{random measure} on
$E$ is a random element with values in $\Mb(E)$. A sequence $\mu_n$
of random measures \textit{converges weakly} to a random measure $\mu
$ if $\lim_{n\to\infty}\E F(\mu_n)=\E F(\mu)$ for every
continuous, bounded function $F\dvtx\Mb(E)\to\R$. Equivalently, $\int_E
f \,d\mu_n$ converges in distribution to $\int_E f\,d\mu$ for every
compactly supported continuous function $f\dvtx E\to\R$; see
\cite{resnickbook}, Section 3.5.
\end{remark}
For $\alpha\geq1$ the logarithmic moment condition (\ref
{eqlogmoment}) is satisfied, which by \cite{izlog} means that the
roots should concentrate near the unit circle. In the next corollary we
compute the rate of convergence of the roots to the unit circle.

%
\begin{corollary}
Let $\alpha\geq1$. As $n\to\infty$, the random probability measure
\[
\frac1 n\sum_{z\in\ZZZ_n} \delta\bigl(b_n\bigl(|z|-1\bigr)
\bigr)
\]
converges weakly to a random, a.s. purely atomic probability measure
on $\R$.
\end{corollary}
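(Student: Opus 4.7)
The plan is to deduce the corollary from Theorem~\ref{theo:complex} in two stages: first push forward the measure convergence under the modulus map $z\mapsto|z|$, then apply a change of variables $w\mapsto b_n(w^{1/b_n}-1)$ (which tends pointwise to $w\mapsto \log w$) to pass from the $|z|^{b_n}$-scale to the $b_n(|z|-1)$-scale. By Theorem~\ref{theo:complex}, the empirical measures $\mu_n := \frac{1}{n}\sum_{k=1}^n \delta(z_{kn}^{\langle b_n\rangle})$ on $\bar\C$ converge weakly to $\Pi_\alpha$. The modulus map $N:\bar\C\to[0,\infty]$ is continuous and sends circles centered at the origin to points, so by the continuous mapping theorem, $\nu_n:=N_*\mu_n = \frac{1}{n}\sum_k \delta(|z_{kn}|^{b_n})$ converges weakly to $\nu := N_*\Pi_\alpha$, which is a random purely atomic probability measure on $[0,\infty]$.

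Next, define the continuous maps $h_n:[0,\infty]\to[-\infty,\infty]$ by $h_n(w)=b_n(w^{1/b_n}-1)$. A short calculation shows that $h_n\to h$ continuously on $[0,\infty]$, where $h(w):=\log w$ (with $h(0):=-\infty$, $h(\infty):=+\infty$): for every convergent sequence $w_n\to w$ in $[0,\infty]$, one has $h_n(w_n)\to h(w)$ in $[-\infty,\infty]$. By the extended continuous mapping theorem, $(h_n)_*\nu_n\to h_*\nu$ weakly on $[-\infty,\infty]$. Since $h_n(|z_{kn}|^{b_n})=b_n(|z_{kn}|-1)$, the left-hand side is precisely the target random measure $\frac{1}{n}\sum_k \delta(b_n(|z_{kn}|-1))$.

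The remaining task, which is the harder step, is to show that for $\alpha\geq 1$ the limit $\nu$ is a.s.\ supported on $(0,\infty)$, so that $h_*\nu$ is a.s.\ a probability measure on $\R$ rather than on $[-\infty,\infty]$, and is purely atomic. Heuristically this is forced by the Ibragimov--Zaporozhets concentration~\cite{iz_log}: $\alpha\geq 1$ implies $\E\log_+|\xi_0|<\infty$, and as explained in the introduction the circles on which the roots concentrate approach the unit circle at rate $1/b_n$ (up to slowly varying terms), so after raising to the power $b_n$ their radii remain in a compact subset of $(0,\infty)$. To argue this rigorously one uses the explicit description of $\Pi_\alpha$ via the least concave majorant of the Poisson point process on $[0,1]\times(0,\infty)$ with intensity $\alpha v^{-\alpha-1}\,du\,dv$ alluded to in the abstract: for $\alpha\geq 1$, the slopes between consecutive vertices of this majorant are a.s.\ finite and positive, and these slopes determine the circle radii of $\Pi_\alpha$. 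This is where the specific structure of $\Pi_\alpha$, rather than Theorem~\ref{theo:complex} as a black box, enters the argument.
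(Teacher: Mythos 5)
Your proposal is correct and takes the natural route from Theorem~\ref{theo:complex} (the paper states this corollary without proof as an immediate consequence): push the empirical measures forward under the modulus, then apply the maps $h_n(w)=b_n(w^{1/b_n}-1)$, which converge continuously on $[0,\infty]$ to $\log$, to transfer from the $|z|^{b_n}$-scale to $b_n(|z|-1)$. One small slip in your final paragraph: for $\alpha\geq 1$ the negatives-of-slopes $R_k$ are a.s.\ finite but not all positive --- Proposition~\ref{prop:number_vertices_majorant} gives $R_k\to-\infty$ as $k\to-\infty$ and $R_k\to+\infty$ as $k\to+\infty$ --- but only finiteness is needed, so that each atom $e^{R_k}\in(0,\infty)$ and the limit measure charges neither $0$ nor $\infty$.
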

In the case $\bar F(t)\sim c/t$ as $t\to+\infty$, where $c>0$, the
logarithmic moment condition (\ref{eqlogmoment}) just fails. We have
no concentration of the roots near the unit circle for the first time.
In this case, Theorem~\ref{theocomplex} simplifies as follows.
%
%
\begin{corollary}
Suppose that $\bar F(t)\sim c/t$ as $t\to+\infty$. Then, the
empirical measure $\frac1n \sum_{z\in\ZZZ_n} \delta(z)$ converges
weakly to some nontrivial limiting random probability measure on $\C$.
\end{corollary}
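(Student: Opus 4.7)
The plan is to derive this corollary from Theorem~\ref{theo:complex} at $\alpha=1$ via a continuous-mapping argument that undoes the radial exponent. Because $\bar F(t)\sim c/t$ as $t\to\infty$, the tail condition~\eqref{eq:tail1} holds with $\alpha=1$, and the normalizing sequence satisfies $a_n\sim cn$, whence
\[
b_n=\frac{n}{a_n}\ton \frac{1}{c}.
\]
Theorem~\ref{theo:complex} therefore yields the weak convergence
\[
\nu_n:=\frac 1 n\sum_{k=1}^n \delta\bigl(z_{kn}^{\langle b_n\rangle}\bigr) \toweak \Pi_1
\]
of random probability measures on $\bar\C$.

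To pass from $\nu_n$ to the unscaled empirical measure $\mu_n:=\frac 1 n\sum_{k=1}^n\delta(z_{kn})$, I introduce the maps $\varphi_n,\varphi:\bar\C\to\bar\C$ defined by $\varphi_n(z)=z^{\langle 1/b_n\rangle}$ and $\varphi(z)=z^{\langle c\rangle}$, extended continuously with $\varphi(0)=\varphi_n(0)=0$ and $\varphi(\infty)=\varphi_n(\infty)=\infty$. Since $c\in(0,\infty)$ and $1/b_n\to c$, the sequence $\varphi_n$ converges continuously to $\varphi$ on the compact space $\bar\C$, in the sense that $\varphi_n(z_n)\to\varphi(z)$ whenever $z_n\to z$ in the spherical metric. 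Observing that $\mu_n=(\varphi_n)_\ast\nu_n$, the extended continuous-mapping theorem for weak convergence of random probability measures yields
\[
\mu_n\toweak \varphi_\ast\Pi_1.
\]

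For nontriviality, Theorem~\ref{theo:complex} describes $\Pi_1$ as an a.s.\ countable convex combination of uniform measures on circles centered at $0$, and $\varphi$ sends a circle $\{|z|=r\}$ to $\{|z|=r^c\}$, so $\varphi_\ast\Pi_1$ has the same structure; because the radii of the circles in $\Pi_1$ are a.s.\ bounded (as announced in the introduction for $\alpha=1$), $\varphi_\ast\Pi_1$ is supported in a bounded annulus of $\C$ and no mass escapes to $\infty$. The main technical point is the continuous-mapping step with the varying maps $\varphi_n$: the only delicate issues are at $0$ and $\infty$, but these reduce, via the $z\mapsto 1/z$ symmetry of the spherical metric, to the routine joint continuity of $(r,t)\mapsto r^t$ on $[0,\infty)\times(0,\infty)$, combined with $1/b_n\to c>0$. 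The genuine randomness of the limit (and in particular, that the mass is not concentrated on a single circle) is inherited from $\Pi_1$, for which the Poisson intensity $v^{-2}\,du\,dv$ produces infinitely many atoms contributing vertices to the least concave majorant.
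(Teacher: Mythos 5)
Your core argument is correct and is essentially the one the corollary rests on: with $\bar F(t)\sim c/t$ one has $\alpha=1$, $a_n\sim cn$, $b_n\to 1/c$, and the rescaling map $w\mapsto w^{\langle 1/b_n\rangle}$ converges (uniformly on the compact $\bar\C$, by joint continuity of $(w,t)\mapsto w^{\langle t\rangle}$ on $\bar\C\times(0,\infty)$) to $w\mapsto w^{\langle c\rangle}$, so the extended continuous mapping theorem turns the conclusion of Theorem~\ref{theo:complex} into the unscaled statement.

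However, your justification that ``no mass escapes to $\infty$'' is wrong. You assert that the radii of the circles making up $\Pi_1$ are a.s.\ bounded ``as announced in the introduction for $\alpha=1$''; the introduction claims no such thing, and the claim is in fact false. By Proposition~\ref{prop:number_vertices_majorant}, for $\alpha\geq 1$ the slopes of the majorant satisfy $R_k\to-\infty$ as $k\to-\infty$ and $R_k\to+\infty$ as $k\to+\infty$ a.s., so the radii $\exp(R_k)$ of the circles accumulate both at $0$ and at $\infty$; there is no bounded annulus containing the support. What actually saves the statement is not boundedness of the radii but smallness of the weights: the mass carried by the circle indexed by $k$ is $X_{k+1}-X_k$, and since the $X_k$ exhaust $[0,1]$, one has $\sum_{|k|\geq K}(X_{k+1}-X_k)=X_{-K}+(1-X_K)\to 0$ as $K\to\infty$. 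Hence $\Pi_1(\{|z|>M\})\to 0$ and $\Pi_1(\{|z|<1/M\})\to 0$ a.s.\ as $M\to\infty$, so $\Pi_1$ (and therefore $\varphi_*\Pi_1$) places no mass at $0$ or $\infty$ and is a.s.\ a probability measure on $\C$, even though its support is unbounded in both radial directions. You should replace the ``bounded annulus'' sentence with this tail-weight argument; the rest of your proof stands.
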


We proceed to the description of the random probability measure $\Pi
_{\alpha}$. Let $\rho=\sum_{k=1}^{\infty}\delta(U_k,V_k)$ be a
Poisson point process on $[0,1]\times(0,\infty)$ with intensity
measure $\alpha v^{-(\alpha+1)} \,du \,dv$. Equivalently, $U_k$, $k\in\N
$, are i.i.d. random variables with a uniform distribution on $[0,1]$
and, independently, $V_k=W_k^{-1/\alpha}$, where $W_1,W_2,\ldots$ are
the arrival times of a homogeneous Poisson point process on $(0,\infty
)$ with intensity $1$.
Of major importance for the sequel is the \textit{least concave
majorant} (called simply \textit{majorant}) of $\rho$ (see
Figure~\ref{figmajorant}) which is a function $\CCC_{\rho}\dvtx[0,1]\to
[0,\infty)$ defined by
\[
\CCC_{\rho}(t):=\inf_{f} f(t),\qquad t\in[0,1],
\]
where the infimum is taken over the set of all concave functions
$f\dvtx[0,1]\to[0,\infty)$ satisfying $f(U_k)\geq V_k$ for all $k\in\N$.
From a constructive viewpoint, the least concave majorant $\CCC_{\rho
}$ may be defined as follows. Let $(X_0,Y_0)$ be the a.s. unique atom
of $\rho$ having a maximal second coordinate $Y_0$ among all atoms of
$\rho$. Consider a horizontal line passing through $(X_0,Y_0)$. Rotate
this line around $(X_0,Y_0)$ in a \textit{clock-wise} direction until
it hits some atom of $\rho$, denoted by $(X_1,Y_1)$, other than
$(X_0,Y_0)$. Continue to rotate the line in the clock-wise direction,
%
%
\begin{figure}

\includegraphics{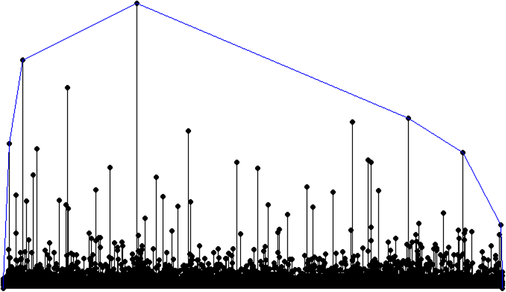}

\caption{The least concave majorant of the Poisson point process $\rho
$ on $[0,1]\times(0,\infty)$ with intensity $\alpha v^{-(\alpha+1)}
\,du \,dv$, where $\alpha=2$.}
\label{figmajorant}
\end{figure}
%
this time around $(X_1,Y_1)$, until it hits some atom of $\rho$,
denoted by $(X_2,Y_2)$, other than $(X_1,Y_1)$. Continue to rotate the
line around $(X_2,Y_2)$, and so on. The procedure is terminated if at
some time the line hits the point $(1,0)$. [As we will see later, this
happens a.s. if and only if $\alpha\in(0,1)$.] Otherwise, the
procedure is repeated indefinitely. Analogously, we can start with a
horizontal line passing through $(X_0,Y_0)$ and rotate it in an \textit
{anti-clockwise} direction obtaining a sequence of points
$(X_{-1},Y_{-1}), (X_{-2},Y_{-2}),\ldots\,$. The sequence may eventually
terminate at $(0,0)$. [We will see that this happens a.s. if and only
if $\alpha\in(0,1)$.] Now, join any point $(X_k,Y_k)$ to the next
point $(X_{k+1},Y_{k+1})$ by a line segment. The polygonal path
constructed in this way is the graph of the majorant $\CCC_{\rho}$.
The points $(X_k,Y_k)$ are called the \textit{vertices} of the
majorant, the intervals $[X_k,X_{k+1}]$ are called the \textit
{linearity intervals} of the majorant. The least concave majorant $\CCC
_{\rho}$ is thus a piecewise linear function with at most countably
many linearity intervals. We write $\CCC_{\rho}$ in the form
%
%
\begin{equation}
\label{eqCCCrho} \CCC_{\rho}(t)=S_k-R_k t,\qquad
t\in[X_k,X_{k+1}].
\end{equation}
%

The limiting random probability measure $\Pi_{\alpha}$ in Theorem
\ref{theocomplex} can be constructed as follows. For $r>0$ let
$\Lambda_r$ be the length measure (normalized to have total mass~$1$)
on the circle $\{z\in\C\dvtx|z|=r\}$. Then
\[
\Pi_{\alpha}=\sum_{k} (X_{k+1}-X_k)
\Lambda_{\exp(R_k)},
\]
where the (finite or infinite) sum is taken over all linearity
intervals $[X_k,X_{k+1}]$ of the majorant $\CCC_{\rho}$.
Thus Theorem~\ref{theocomplex} states that the roots of $G_n$
asymptotically concentrate on random circles which correspond to the
linearity intervals of the majorant. The radii of these random circles
are $\exp({R_k})$, where the $R_k$'s are the negatives of the slopes
of the majorant. The proportion of roots on any circle is the length of
the corresponding linearity interval.

Our next result describes the distribution of the complex roots of
$G_n$ in the case $\alpha=0$.
We assume that
%
%
\begin{equation}
\label{eqtailalpha0} \bar F(t):= \P\bigl[{\log}|\xi_0|>t\bigr]
\qquad\mbox{is slowly
varying at } {+}\infty.
\end{equation}
%
We will show that under (\ref{eqtailalpha0}), with probability
close to $1$, the complex roots of $G_n$ are located on just $2$
concentric circles, one of them with a radius close to $0$ and the
other one with a radius close to $\infty$.
A weaker result was obtained by G{\"o}tze and Zaporozhets \cite
{goetzezaporozhets} under a more restrictive assumption on the tails.
Let $\tau_n$ be the index of the maximal (in the sense of absolute
value) coefficient of $G_n$, that is, $\tau_n\in\{0,\ldots,n\}$ is
such that $|\xi_{\tau_n}|=\max_{k=0,\ldots,n}|\xi_k|$.
Denote by $w_{1n},\ldots,w_{\tau_n n}$ the roots of the equation $\xi
_{\tau_n} z^{\tau_n}+\xi_0=0$ and by $w_{(\tau_n+1)n},\ldots,w_{nn}$
the roots of the equation $\xi_n z^{n-\tau_n}+\xi_{\tau_n}=0$.
%
%
\begin{theorem}\label{theocomplexalpha0}
Suppose that (\ref{eqtailalpha0}) is satisfied and $\xi_0\neq0$
a.s. Fix some $A>0$. Then the probability that the following three
statements hold simultaneously goes to $1$ as $n\to\infty$:
\begin{longlist}[(3)]
\item[(1)] $\tau_n$ is uniquely defined;
\item[(2)] it is possible to renumber the roots $z_{1n},\ldots,z_{nn}$
of $G_n$ so that
\[
|z_{kn}-w_{kn}|<e^{-n^A} |w_{kn}|,\qquad 1
\leq k\leq n;
\]
\item[(3)] we have $|w_{kn}|<e^{-n^A}$ for $1\leq k\leq\tau_n$ and
$|w_{kn}|>e^{n^A}$ for $\tau_n<k\leq n$.
\end{longlist}
\end{theorem}
%
%
\begin{corollary}\label{corcomplexalpha0}
Under (\ref{eqtailalpha0}), the empirical measure $\frac1n \sum_{z\in
\ZZZ_n} \delta(z)$ converges weakly, as a random probability
measure on the Riemann sphere $\bar\C$, to
$U\delta(0)+(1-U)\delta(\infty)$, where $U$ is a random variable
with a uniform distribution on $[0,1]$.
\end{corollary}

\subsection{Properties of the majorant}\label{subsecpropmaj}
In this section we study some of the properties of the least concave
majorant $\CCC_{\rho}$. Note that random convex hulls similar to
$\CCC_{\rho}$ appeared in the literature; see \cite{majumdar} and
the references therein. The next proposition will be used frequently.
%
%
\begin{proposition}\label{propnumberverticesmajorant}
Let $L_{\alpha}$ be the number of linearity intervals of the majorant
$\CCC_{\rho}$.
If $\alpha\in(0,1)$, then $L_{\alpha}<\infty$ a.s.
If $\alpha\geq1$, then $L_{\alpha}=\infty$ a.s. Moreover, in this
case any neighborhood of $0$ (as well as any neighborhood of $1$)
contains infinitely many linearity intervals of $\CCC_{\rho}$ a.s.,
and we have $\lim_{k\to-\infty}R_k=-\infty$ and $\lim_{k\to
+\infty}R_k=+\infty$ a.s.
\end{proposition}
\begin{pf}
Take any $\eps>0$ and consider the set $D_{\eps}$ of all pairs
$(x,y)\in[0,1]\times(0,\infty)$ such that $y>\eps x$. Integrating\vadjust{\goodbreak}
the intensity of $\rho$ over $D_{\eps}$ we see that $\rho(D_{\eps
})=\infty$ a.s. if and only if $\alpha\geq1$.
If $\alpha\in(0,1)$, we have only finitely many points above any line
$y>\eps x$ and hence, the majorant $\CCC_{\rho}$ has a well-defined
first segment starting at $(0,0)$.
On the other hand, if $\alpha\geq1$, then no such first segment
exists and consequently, we have infinitely many linearity intervals of
$\rho$ in any neighborhood of $0$. By symmetry, the same is true for
the point $1$.
\end{pf}
The distribution of $L_{\alpha}$ in the case $\alpha\in(0,1)$ seems
difficult to characterize. In the next theorem, we compute the
expectation of $L_{\alpha}$ in terms of the modular constant $C(\beta
)$ introduced by Barnes \cite{barnes} in his theory of
the double Gamma function. Let $\psi(z)=\Gamma'(z)/\Gamma(z)$ be the
logarithmic derivative of the Gamma function. Barnes
\cite{barnes} showed that the following limit exists for $\beta>0$:
%
%
\begin{equation}
\label{eqbarnesmodularconst} C(\beta):=\lim_{n\to\infty} \Biggl\{\sum
_{m=1}^{n} \psi(m\beta)- \biggl(n+\frac12-\frac1{2
\beta} \biggr) \log(n\beta)+n \Biggr\}. 
\end{equation}
The role of the constant $C(\beta)$ in the theory of the double Gamma
function is similar to the role of the Euler--Mascheroni constant
$\gamma=\lim_{n\to\infty}(\sum_{k=1}^n\frac1k-\log n)$ in the
theory of the usual Gamma function. 
%
%
\begin{theorem}\label{theointensity}
For $\alpha\in(0,1)$, $\alpha\neq1/2$, we have
%
%
\begin{equation}
\label{eqtheoexpectnumbervertices} \E L_{\alpha} = 2+\frac{2-2\alpha
}{2\alpha-1}
\biggl(1-2C(1-\alpha)+\frac{\log
(1-\alpha)-\alpha\gamma}{1-\alpha} \biggr).
\end{equation}
For $\alpha=1/2$ the result should be interpreted by continuity.
\end{theorem}
We will provide a representation of $\E L_{\alpha}$ as a definite
integral in equation (\ref{eqexpLalphaint}) below. Using this
representation it is possible to compute the value of $\E L_{\alpha}$
in closed form for any rational $\alpha$. Here are some
examples:\vspace*{-8pt}

\begin{table}[h]
\begin{tabular}{@{}lccccccc@{}}
\hline
$\alpha$ & $0$ & $1/4$ & $1/3$ & $1/2$ & $2/3$ & $3/4$ & $1$ \\
$\E L_{\alpha}$ & 2 & $ (\frac32-\frac{4}{3\sqrt3} )\pi
$ & $\frac{4\pi}{3\sqrt3}$ &$\frac32 +\frac{\pi^2}{8}$
& $2+\frac{2\pi}{3\sqrt3}$ &
$2+\frac{\pi}{2}$ & $+\infty$ \\
\hline
\end{tabular}
\end{table}\vspace*{-8pt}

The values at $\alpha=0$ and $\alpha=1$ should be understood as
one-sided limits. As a corollary, we have $L_\alpha\to2$ in
distribution as $\alpha\downarrow0$. Another way to see this is the
following theorem.
%
%
\begin{theorem}\label{theoprobabtwoseg}
For $\alpha\in(0,1)$ we have $\P[L_{\alpha}=2]=1-\alpha$.
\end{theorem}

\subsection{Real roots}\label{subsecrealroots}
Suppose now that the coefficients of the polynomial $G_n(z)=\sum
_{k=0}^n \xi_k z^k$ are
i.i.d. \textit{real-valued} random variables.
Denote by $\RRR_n$ the collection of real roots of $G_n$, and let
$N_n=|\RRR_n|$ be the number of real roots.\vadjust{\goodbreak}
For a special family of distributions satisfying (\ref{eqtail1}) with
$\alpha>1$ Shepp and Farahmand \cite{sheppfarahmand}
showed that $\E N_n\sim\frac{2\alpha-2}{2\alpha-1}\log n$ as $n\to
\infty$. In the next theorem we describe the positions of the real
roots of $G_n$ in the limit $n\to\infty$ for every $\alpha>0$.
Recall the notation $z^{\langle a\rangle}=|z|^a \sgn(z)$, where
$z,a\in\R$. Define a point process $\Upsilon_n$ on $\R$ by
\[
\Upsilon_n=\sum_{z\in\RRR_n} \delta
\bigl(z^{\langle b_n \rangle}\bigr).
\]
Recall that a random measure $\mu$ is called a point process if the
random variable $\mu(K)$ is integer-valued for every compact set $K$;
see \cite{resnickbook}, Section~3.1. In addition to (\ref{eqtail1})
we assume that the following limit exists:
%
%
\begin{equation}
\label{eqtailreal} c:=\lim_{t\to+\infty}\frac{\P[\xi_0>t]}{\P[|\xi
_0|>t]}\in[0,1].
\end{equation}

%
\begin{theorem}\label{theoreal}
Suppose that (\ref{eqtail1}) and (\ref{eqtailreal}) hold with some
$\alpha>0$. Write $p=\P[\xi_0>0]$ and suppose that $\xi_0\neq0$ a.s.
\begin{longlist}[(2)]
\item[(1)] For $\alpha\geq1$ the point process $\Upsilon_{n}$ converges
weakly to some point process $\Upsilon_{\alpha,c}$ on $\R\bsl\{0\}$.
\item[(2)] For $\alpha\in(0,1)$ the point process $\Upsilon_{2n}$
(resp., $\Upsilon_{2n+1}$) converges weakly to some point
process $\Upsilon_{\alpha,c,p}^{+}$ (resp.,
$\Upsilon_{\alpha,c,p}^{-}$) on $[-\infty,+\infty]$ and on $\R$.
\end{longlist}
\end{theorem}
The somewhat technical description of the point processes $\Upsilon
_{\alpha,c}$, $\Upsilon_{\alpha,c,p}^{\pm}$ is postponed to
Section~\ref{secproofrealdefproc}. Recall that by Theorem
\ref{theocomplex} the complex roots of $G_n$ are located asymptotically on
a set of random circles. Each circle crosses the real line at $2$
points. We will show that any of these points may or may not be a real
root of $G_n$ with some probabilities. For $\alpha\in(0,1)$ the point
processes $\Upsilon_{\alpha,c,p}^{\pm}$ have a.s. finitely many
atoms, whereas for $\alpha\geq1$ the atoms of the point process
$\Upsilon_{\alpha,c}$ accumulate a.s. at $\pm0$ and $\pm\infty$.
(Of course, this is related to Proposition
\ref{propnumberverticesmajorant}.) Since the map assigning to a finite
counting measure on $[-\infty,\infty]$ its total mass is continuous
(locally constant) in the weak topology, we obtain the following
statement on the number of real roots of $G_n$.
%
%
\begin{corollary}\label{correalnumber}
Suppose that (\ref{eqtail1}) and (\ref{eqtailreal}) hold with
$\alpha\in(0,1)$ and let $\xi_0\neq0$ a.s.
Then the sequence $\{N_{2n}\}_{n\in\N}$ (resp., $\{N_{2n+1}\}_{n\in
\N}$) converges in distribution to a random variable
$N^{+}_{\alpha,c,p}$ (resp., $N^{-}_{\alpha,c,p}$).
\end{corollary}
%
%
\begin{remark}
The expectations $\E N^{+}_{\alpha,c,p}$ and $\E N^{-}_{\alpha,c,p}$
can be computed using the representation of the point processes
$\Upsilon_{\alpha,c,p}^{\pm}$ given in Section
\ref{secproofrealdefproc}.
\[
\E N^{+}_{\alpha,c,p}= \E N^{-}_{\alpha,c,p}=
\bigl(2c(1-c)+\tfrac12 \bigr) (\E L_{\alpha}-2)+2(p+c-2pc)+1.
\]
For instance, if the distribution of $\xi_0$ is symmetric with respect
to the origin, then both expectation are equal to $\E L_{\alpha}$. We
conjecture that the convergence in Corollary~\ref{correalnumber}
holds in the $L^1$-sense.\vadjust{\goodbreak}
\end{remark}
%
%
\begin{remark}
The behavior of $N_n$ in the case $\alpha=1$ remains open. For $\alpha
=1$ the result of \cite{sheppfarahmand} turns formally into $\E
N_n=o(\log n)$, whereas the fact that $\Upsilon_{1,c}$ has infinitely
many atoms a.s. suggests that $\E N_n$ should be infinite. It is
natural to conjecture that for $\alpha=1$, we should have $\E N_n\sim
K\log\log n$ for some $K>0$.
\end{remark}
%

Finally, we investigate the number of real roots of $G_n$ in the case
$\alpha=0$.
%
%
\begin{theorem}\label{theorealalpha0}
Suppose that (\ref{eqtailalpha0}) and (\ref{eqtailreal}) hold,
$\xi_0\neq0$ a.s., and write $p=\P[\xi_0>0]$. Then, the sequence $\{
N_{2n}\}_{n\in\N}$ converges weakly to a random variable
$N^{+}_{0,c,p}$ and the sequence $\{N_{2n+1}\}_{n\in\N}$ converges
weakly to a random variable $N^{-}_{0,c,p}$ such that
%
%
\begin{eqnarray}
\label{eqtheorealalpha0eq1} \P\bigl[N^{+}_{0,c,p}=m\bigr]&=&
\cases{ \frac12 \bigl(cp^2+(1-c) (1-p)^2\bigr), &\quad$m=0$,
\vspace*{2pt}\cr
\frac12+ p(1-p), &\quad$m=2$,
\vspace*{2pt}\cr
\frac12 \bigl(c(1-p)^2+(1-c)p^2
\bigr), &\quad$m=4$;}
\\
\label{eqtheorealalpha0eq2} \P\bigl[N^{-}_{0,c,p}=m\bigr]&=&
\cases{ 
1-p-c+2p c, &\quad$m=1$,
\cr
p+c-2p c, &\quad$m=3$.}
\end{eqnarray}
\end{theorem}
%
%
\begin{remark}
If the distribution of $\xi_0$ is symmetric with respect to the
origin, we obtain the following results: $N^{+}_{0,1/2,1/2}$ takes the
values $0,2,4$ with probabilities $ 1/8, 3/4, 1/8$, and
$N^{-}_{0,1/2,1/2}$ takes the values $1,3$ with probabilities $1/2, 1/2$.
\end{remark}
%
%
%
\begin{remark}
For fixed $p\in[0,1]$, both
\[
\min_{c\in[0,1]} \E N^{+}_{0,c,p}\quad \mbox{and}\quad
\min_{c\in[0,1]} \E N^{-}_{0,c,p}
\]
are equal to $1+2\min(p,1-p)$.
The same number appeared in \cite{zn} as the minimal expected number
of real roots of a random polynomial.
\end{remark}


\subsection{Emergence of the majorant}
The least concave majorant which we encountered above is reminiscent of
the Newton polygons appearing when solving polynomial equations with
non-Archimedian (e.g., $p$-adic) coefficients; see~\cite{koblitzbook}, Chapter IV. Of course, our random polynomial $G_n$ has
complex (Archimedian) coefficients. 
However, non-Archimedian effects will appear in the following way.
Consider the sum $c_1e^{nx_1}+\cdots+c_d e^{nx_d}$, where $x_i>0$ and
$c_i\in\C$. If $n$ is large, then the most easy way such sum may
become zero is if two terms, say $c_ke^{nx_k}$ and $c_le^{nx_l}$,
cancel each other and the other terms are much smaller than these two.
We will show that under (\ref{eqtail1}) similar considerations apply
to the polynomial $G_n(z)=\sum_{j=0}^n \xi_jz^j$ with high
probability: $z\in\C$ is a root of $G_n$ essentially only if two of
the terms, $\xi_kz^k$ and $\xi_lz^l$, cancel each other, and all
other terms are of smaller order. Geometrically, this means that the
points $(k,{\log}|\xi_k|)$ and $(l,{\log}|\xi_l|)$ are neighboring
vertices of the least concave majorant of the set $\{(j,{\log}|\xi
_j|)\dvtx j=0,\ldots,n\}$. The
nonzero roots of $\xi_kz^k+\xi_lz^l=0$ form a
regular polygon inscribed into the circle whose radius is the
exponential of minus the slope of the line joining the points $(k,{\log}
|\xi_k|)$ and $(l,{\log}|\xi_l|)$. Taking the union of such circles
over all segments of the majorant we obtain essentially all the roots
of~$G_n$. To complete the argument, we need to find the limiting form
of the majorant as $n\to\infty$. This is done using the following
proposition which is known in the extreme-value theory; see
\cite{resnickbook}, Corollary 4.19(ii). 
%
%
\begin{proposition}\label{propresnick}
Let $\xi_0,\xi_1,\ldots$ be i.i.d. random variables
satisfying~(\ref{eqtail1}). Then the following convergence holds
weakly on the space of locally finite counting measures on $[0,1]\times
(0,\infty]$:
\[
\rho_n:=\sum_{k=0}^n \delta
\biggl(\frac kn, \frac{{\log}|\xi_k|}{a_n} \biggr) \toweak\sum
_{i=1}^{\infty}\delta(U_i,V_i)
=: \rho.
\]
Here, $\rho$ is a Poisson point process on $[0,1]\times(0,\infty)$
with intensity $\alpha v^{-(\alpha+1)}\,du\,dv$. We agree that the points
for which ${\log}|\xi_k|\leq0$ are not counted in $\rho_n$.
\end{proposition}

The paper of Shepp and Farahmand \cite{sheppfarahmand}
seems to be the only work where random polynomials with coefficients
satisfying (\ref{eqtail1}) have been considered. The method used
there (characteristic functions) is very different from our approach
based on majorants. Whether the results of \cite{sheppfarahmand} can
be recovered (or strengthened) using our approach remains open. Let us
also mention that the least concave majorant appeared in the theory of
entire functions; see \cite{valironbook}, page 28. For example,
Hardy \cite{hardy} showed that the zeros of the
deterministic entire function $\sum_{k=0}^{\infty} z^{k^3}/(k^3)!$
have a circle structure similar to the structure of zeros of $G_n$
under (\ref{eqtail1}). Eigenvalues of random matrices with i.i.d.
heavy-tailed entries have been studied in \cite{bordenaveetal}.

\section{The main lemma}\label{secmainlemma}
The next lemma is the key step in the proof.
Let $g(z)=\sum_{j=0}^n a_jz^j$ be a (deterministic) polynomial with
complex coefficients. Suppose that the points $(k,{\log}|a_k|)$ and
$(l,{\log}|a_l|)$, where $0\leq k<l\leq n$, are neighboring vertices on
the least concave majorant of the set $\{(j,{\log}|a_j|)\dvtx j=0,\ldots
,n\}
$. That is to say, for some $s,r\in\R$, we have
%
%
\begin{eqnarray}
\label{eqmodulusxikl} {\log}|a_{k}|&=&s-kr,\qquad
{\log}|a_{l}|=s-lr,\nonumber\\[-8pt]\\[-8pt]
h:\!&=&\mathop{\min_{0\leq j\leq n}}_{j\neq k,l}
\bigl(s-jr-{\log}|a_j|\bigr)>0.\nonumber
\end{eqnarray}
Here, we have assumed that no three points of the majorant are on the
same line. Note that $h$ measures the gap between the line passing
through the points $(k,{\log}|a_k|)$, $(l,{\log}|a_l|)$ and the points
lying below this line.
%
%
\begin{lemma}\label{lemmain}
If $\delta>0$ is such that $ne^{\delta n-h}<1-e^{-\delta}$, then in
the ring $e^{r-\delta}<|z|<e^{r+\delta}$ there are exactly $l-k$
roots of $g$. Moreover, if $\zeta$ is such that $2ne^{2\delta
n-h}<\zeta<\frac{\pi}{l-k}$, then the set
%
%
\begin{equation}
\label{eqsetsectorring} 
\biggl\{ z\in\C\dvtx e^{r-\delta}<|z|<e^{r+\delta},
\biggl\llvert\arg z-\frac{\varphi+ 2\pi m}{l-k}\biggr\rrvert\leq\zeta
\biggr\},
\end{equation}
where $\varphi=\arg(-a_k/a_l)$, contains exactly one root of $g$ for
every $m=1,\ldots,\break l-k$.
\end{lemma}
Here, we agree to understand the distance between the arguments of
complex numbers as the geodesic distance on the unit circle. Also, let
the index $j$ be always restricted to $0\leq j\leq n$.


\begin{pf*}{Proof of Lemma~\ref{lemmain}}
We will prove a stronger version of the lemma. Namely, we will show
that the statement holds for the family of polynomials
\[
g_t(z)=a_kz^k+a_lz^l+t
\sum_{j\ne k,l}a_jz^j,\qquad 0\leq
t\leq1.
\]
Note that in particular, $g_0(z)=a_kz^k+a_lz^l$ and $g_1(z)=g(z)$.
Let $z\in\C$ be such that $|z|=e^{r-\delta}$. It follows from
(\ref{eqmodulusxikl}) that
\[
t\biggl\llvert\sum_{j\neq k, l}
a_j z^j\biggr\rrvert\leq\sum
_{j\neq k, l} e^{s-jr-h} e^{j(r-\delta)} < 
n
e^{s-h}.
\]
On the other hand, again by (\ref{eqmodulusxikl}),
\[
\bigl|a_{k}z^{k}+a_{l}z^{l}\bigr| \geq
\bigl|a_{k}z^{k}\bigr|-\bigl|a_{l}z^{l}\bigr| =
e^{s}e^{-\delta k}\bigl(1-e^{-\delta(l-k)}\bigr) > e^{s-\delta n}
\bigl(1-e^{-\delta}\bigr).
\]
Since $ne^{\delta n-h}<1-e^{-\delta}$ holds, everywhere on the circle
$|z|=e^{r-\delta}$ we have
%
%
\begin{equation}
\label{eqcondrouche} \bigl|a_{k}z^{k}+a_{l}z^{l}\bigr|
> t\biggl\llvert\sum_{j\neq k, l} a_j
z^j\biggr\rrvert.
\end{equation}
Hence, by Rouch\'e's theorem, the polynomial $g_t$ has exactly $k$
roots in the circle $|z|\leq e^{r-\delta}$.

Let now $z\in\C$ be such that $|z|=e^{r+\delta}$. Then
%
%
\begin{equation}
\label{eqestrouche1} t\biggl\llvert\sum_{j\neq k, l}
a_j z^j\biggr\rrvert\leq\sum
_{j\neq k,l} e^{s-jr-h} e^{j(r+\delta)} < 
ne^{s-h+\delta n}.
\end{equation}
On the other hand,
\[
\bigl|a_{k}z^{k}+a_{l}z^{l}\bigr| \geq
\bigl|a_{l}z^{l}\bigr|-\bigl|a_{k}z^{k}\bigr| =
e^{s}e^{\delta l}\bigl(1-e^{\delta(k-l)}\bigr) > e^{s}
\bigl(1-e^{-\delta}\bigr).
\]
Therefore, inequality (\ref{eqcondrouche}) also holds everywhere on
the circle $|z|=e^{r+\delta}$.
It follows from Rouch\'e's theorem that the polynomial $g_t$ has
exactly $l$ roots in the circle $|z|\leq e^{r+\delta}$.
Hence, the polynomial $g_t$ has exactly $l-k$ roots in the ring
$e^{r-\delta}\leq|z|\leq e^{r+\delta}$.\vadjust{\goodbreak}

Let us now show that these $l-k$ roots are located approximately at the
same positions as the nonzero roots of the equation
$a_lz^{l}+a_kz^k=0$. Let $z_0$ be some root of $g_t$ satisfying
$e^{r-\delta}\leq|z_0|\leq e^{r+\delta}$. Then, repeating the
argument of (\ref{eqestrouche1}), we obtain that
%
%
\begin{equation}
\label{eqestarg1} \bigl\llvert a_lz_0^{l}+a_kz_0^k
\bigr\rrvert= \biggl\llvert\sum_{j\neq k,l}a_jz_0^j
\biggr\rrvert< n e^{s-h+\delta n}. 
\end{equation}
Recall that $\varphi=\arg(-a_k/a_l)$. The arguments of the nonzero
roots of the equation $a_lz^{l}+a_kz^k=0$ are given by $\frac{\varphi
+ 2\pi m}{l-k}$, where $m=1,\ldots,l-k$, and their moduli are equal to
$e^{r}$. Let
\[
\varsigma=\min_{m=1,\ldots,l-k}\biggl\llvert\arg z_0-
\frac{\varphi+ 2\pi
m}{l-k}\biggr\rrvert.
\]
Note that $\varsigma\in[0,\frac{\pi}{l-k}]$ by definition. Then
\[
\bigl|\arg\bigl(a_lz_0^{l}\bigr)-\arg
\bigl(-a_kz_0^k\bigr)\bigr|=\bigl|\arg
z_0^{l-k}-\varphi\bigr|=(l-k) \varsigma.
\]
By the inequality $|z_1-z_2|\geq2|z_1|\sin(|{\arg z_1}-\arg z_2|/2)$
valid for $|z_1|\leq|z_2|$ and the inequality $\sin x\geq\frac2 {\pi
} x$ valid for $x\in[0,\frac{\pi} 2]$, we obtain
%
%
\begin{equation}
\label{eqestarg2} \bigl\llvert a_lz_0^{l}+
a_kz_0^k\bigr\rrvert\geq2e^{s-\delta l}
\sin\biggl(\frac{(l-k)\varsigma}{2} \biggr) \geq\frac1 {2}e^{s-\delta
n} \varsigma.
\end{equation}
It follows from (\ref{eqestarg1}) and (\ref{eqestarg2}) that
$\varsigma< 2ne^{2\delta n-h}$ and hence $\varsigma<\zeta$.
Therefore, every root $z_0$ of $g_t$ such that $e^{r-\delta}\leq
|z_0|\leq e^{r+\delta}$ is contained in a set of the form (\ref
{eqsetsectorring}) for some $m=1,\ldots,l-k$. To complete the proof,
it remains to show that every set (\ref{eqsetsectorring}) contains
exactly one root of $g_t$. Since $\zeta<\frac{\pi}{l-k}$, all these
sets are disjoint. By the above, $g_t$ does not vanish on their
boundaries. It follows from this and the argument principle that the
number of roots of $g_t$ in any set (\ref{eqsetsectorring}) is
continuous as a function of $t\in[0,1]$ and hence, constant.
Obviously, every set (\ref{eqsetsectorring}) contains exactly one
root of $g_0$ and hence, exactly one root of $g_t$.
\end{pf*}

\section{Least concave majorants and weak convergence}\label
{secmajorantconv}

Proposition~\ref{propresnick}\break states the convergence of the point
process $\rho_n$ formed by the logarithms of the coefficients of the
random polynomial $G_n$ to the limiting Poisson process $\rho$. We
will need to deduce from this the weak convergence of certain
functionals of $\rho_n$ to the same functionals of $\rho$. This will
be done using the following well-known continuous mapping theorem;
see \cite{resnickbook}, page 152, or \cite{billingsleybook},
page 30.
%
%
\begin{proposition}\label{propcontmapping}
Let $F\dvtx M_1\to M_2$ be a map between two metric spaces $(M_1,d_1)$ and
$(M_2,d_2)$. Let $X_n$ be a sequence of $M_1$-valued random variables
converging weakly to some $M_1$-valued random variable $X$. If $F$ satisfies
\[
\P[ F \mbox{ is discontinuous at } X]=0,
\]
then $F(X_n)$ converges weakly to $F(X)$ on $(M_2,d_2)$.\vadjust{\goodbreak}
\end{proposition}
In order to apply Proposition~\ref{propcontmapping} we need to prove
the a.s. continuity of the functionals under consideration. This is
the aim of the present section. First we introduce some notation.
Let $\MMM$ be the set of locally finite counting measures $\mu$ on
$[0,1]\times(0,\infty]$ such that $\mu([0,1]\times\{\infty\})=0$.
We endow $\MMM$ with the topology of vague convergence. Every $\mu\in
\MMM$ can be written in the form $\mu=\sum_{i}\delta(u_i, v_i)$,
where $i$ ranges in some at most countable index set and $u_i\in
[0,1]$, $v_i\in(0,\infty)$. The number of atoms of $\mu$ in a set of
the form $[0,1]\times[\eps,\infty)$ is finite for every $\eps>0$,
but the atoms of $\mu$ may (and often will) have accumulation points
in the set $[0,1]\times\{0\}$. 

The least concave majorant of $\mu\in\MMM$ is a function $\CCC_{\mu
}\dvtx[0,1]\to[0,\infty)$ defined by $\CCC_{\mu}(t)=\inf_f f(t)$,
where the infimum is taken over all concave functions $f\dvtx[0,1]\to
[0,\infty)$ such that $f(u_i)\geq v_i$ for all $i$. We write the
piecewise linear function $\CCC_{\mu}$ in the form
%
%
\begin{equation}
\label{eqdefLCMmu} \CCC_{\mu}(t)=s_k-r_kt,\qquad t
\in[x_{k}, x_{k+1}],
\end{equation}
where $k$ ranges over a finite or infinite discrete subinterval of $\Z$.
We set $y_k=\CCC_{\mu}(x_k)$. The intervals $[x_k,x_{k+1}]$ (called
the linearity intervals of the majorant) are always supposed to be
chosen in such a way that the points $(x_k,y_k)$ and
$(x_{k+1},y_{k+1})$ are atoms of $\mu$, and there are no further atoms
of $\mu$ on the segment joining these two points.
Fix some small $\kappa\in(0,1/2)$. Given a counting measure $\mu\in
\MMM$, we define the indices $q'=q'_{\kappa}(\mu)$ and
$q''=q''_{\kappa}(\mu)$ by the conditions $x_{q'}\leq\kappa<
x_{q'+1}$ and $x_{q''-1}<1-\kappa\leq x_{q''}$.

Let $\MMM_1$ be the set of all counting measures $\mu\in\MMM$ with
the following properties:
\begin{longlist}[(3)]
%
\item[(1)] both $0$ and $1$ are accumulation points for the linearity
intervals of $\CCC_{\mu}$;
\item[(2)] $\mu(L)\leq2$ for every line $L\subset\R^2$;
\item[(3)] no atom of $\mu$ has first coordinate $\kappa$ or
$1-\kappa$.
\end{longlist}
Note that every $\mu\in\MMM_1$ has only simple atoms. Denote by
$\NNN$ the space of finite measures on $\R$ endowed with the weak
topology. Let $V_k(\mu)$\vadjust{\goodbreak} be the subset of $[0,1]\times[0,\infty)$
consisting of $[0,1]\times\{0\}$ together with all atoms of~$\mu$,
except for $(x_k,y_k)$ and $(x_{k+1}, y_{k+1})$.
%
%
\begin{lemma}\label{lemPsietacont}
The following mappings are continuous on $\MMM_1$:
\begin{longlist}[(3)]
\item[(1)] $\Psi_{1}\dvtx\MMM\to\NNN$ defined by $\Psi_{1}(\mu)=\sum
_{k=q'}^{q''-1} (x_{k+1}-x_k) \delta(r_k)$;
\item[(2)] $H_1\dvtx\MMM\to\R$ defined by $H_1(\mu)=\min\{s_k-r_ku-v\}
$, where the minimum is over $q'\leq k< q''$ and $(u,v)\in V_k(\mu)$;
\item[(3)] $L_1\dvtx\MMM\to\R$ defined by $L_1(\mu)=\min_{q'\leq
k<q''} (x_{k+1}-x_k)$.
\end{longlist}
%
\end{lemma}
\begin{pf}
Let $\{\mu_n\}_{n\in\N} \subset\MMM$ be a sequence converging to
$\mu\in\MMM_1$ in the vague topology on $[0,1]\times(0,\infty]$.
Let $\eps>0$ be such that
$
2\eps<\min_{q'\leq k<q''}\{s_{k},\break s_{k}-r_{k}\}
$.
Note that the minimum is strictly positive by the definition of $\MMM
_1$. Denote by $(u_l,v_l)$, where $1\leq l\leq m$, all atoms of $\mu$
(excluding those which are vertices of~$\CCC_{\mu}$) with the
property that $v_l>\eps$.
Since $\mu_n\to\mu$ vaguely, we can find (see
\cite{resnickbook}, Proposition 3.13) atoms of $\mu_n$ denoted by
$(x_{kn}, y_{kn})$ (where $q'\leq k\leq q''$) and $(u_{ln},v_{ln})$
(where $1\leq l\leq m$) such that
%
%
\begin{eqnarray}
\label{eqlemcont1a} \lim_{n\to\infty} (x_{kn},y_{kn})
&=& (x_k,y_k),\qquad q'\leq k\leq
q'',
\\
\label{eqlemcont1b} \lim_{n\to\infty} (u_{ln},v_{ln})
&=& (u_l,v_l),\qquad 1\leq l\leq m.
\end{eqnarray}
Moreover, since the vague convergence was required to hold on
$[0,1]\times(0,\infty]$, there are no other atoms of $\mu_n$ having
a second coordinate exceeding $2\eps$, provided that $n$ is
sufficiently large. It follows that as $n\to\infty$ and for all
$q'\leq k<q''$,
%
%
\begin{equation}
\label{eqlemcont2} r_{kn}:=-\frac{y_{(k+1)n}-y_{kn}}{x_{(k+1)n}-x_{kn}}
\to r_k,\qquad
s_{kn}:=y_{kn}+r_{kn} x_{kn}\to
s_k.
\end{equation}
In particular, for sufficiently large $n$, all $q'\leq k<q''$ and all
$1\leq l\leq m$,
\[
s_{kn}-r_{kn} u_{ln}>v_{ln},\qquad
\inf_{u\in[0,1]} (s_{kn}-r_{kn}u)>2\eps.
\]
It follows that for sufficiently large $n$, the segment joining the
points $(x_{kn}, y_{kn})$ and $(x_{(k+1)n}, y_{(k+1)n})$ belongs to the
majorant of $\mu_n$ for every
$q'\leq k<q''$. Also, $x_{q'n}<\kappa<x_{(q'+1)n}$ and
$x_{(q''-1)n}<1-\kappa< x_{q''n}$. 

By using (\ref{eqlemcont1a}), (\ref{eqlemcont1b}), (\ref
{eqlemcont2}) and letting $\eps\downarrow0$, we obtain that
$H_1(\mu_n)\to H_1(\mu)$ and $L_1(\mu_n)\to L_1(\mu)$ as $n\to
\infty$. This proves the continuity of $H_1$ and $L_1$ on $\MMM_1$.
To prove the continuity of $\Psi_{1}$, note that for every continuous,
bounded function $f\dvtx\R\to\R$,
\[
\int_{\R}f\,d\Psi_{1}(\mu_n) = \sum
_{k=q'}^{q''-1} (x_{(k+1)n}-x_{kn})
f(r_{kn}) \ton\sum_{k=q'}^{q''-1}
(x_{k+1}-x_{k}) f(r_{k}). 
\]
Thus, $\Psi_{1}(\mu_n) \to\Psi_{1}(\mu)$ weakly, which proves the
continuity of $\Psi_{1}$.\vadjust{\goodbreak}
\end{pf}

The next lemma will be needed to prove our main results for $\alpha\in
(0,1)$. Let $\MMM_0$ be the set of all nonzero counting measures $\mu
\in\MMM$ with the following properties:
\begin{longlist}[(3)]
\item[(1)] the number of linearity intervals of $\CCC_{\mu}$ is
finite and $\CCC_{\mu}(0)=\CCC_{\mu}(1)=0$;
\item[(2)] $\bar\mu(L)\leq2$ for every line $L\subset\R^2$, where
$\bar\mu=\mu+\delta(0,0)+\delta(1,0)$;
\item[(3)] no atom of $\mu$ has first coordinate $\kappa$ or
$1-\kappa$.
\end{longlist}
%
%
\begin{lemma}\label{lemPsicont} The following mappings are
continuous on $\MMM_0$:
\begin{longlist}[(3)]
\item[(1)] $\Psi_0\dvtx\MMM\to\NNN$ defined by
$\Psi_0(\mu)=\sum_{k} (x_{k+1}-x_k) \delta(r_k)$,
where the sum is over all linearity intervals $[x_k,x_{k+1}]$ of the
majorant $\CCC_{\mu}$;
\item[(2)] $H_0\dvtx\MMM\to[0,\infty]$ defined by $H_0(\mu)= \min\{
s_k-r_ku-v\}$, where the minimum is over $q'< k<q''-1$ and $(u,v)\in
V_k(\mu)$;
%
\item[(3)] $L_0\dvtx\MMM\to[0,\infty]$ defined by $L_0(\mu)=\min_{q'<
k<q''-1} (x_{k+1}-x_k)$.
\end{longlist}
%
\end{lemma}
%
%
\begin{remark}
In fact, $\Psi_0$ is continuous on the whole of $\MMM$, but we will
not need this. The minimum over an empty set is $+\infty$.
\end{remark}
%
%
\begin{pf*}{Proof of Lemma~\ref{lemPsicont}}
Let $\{\mu_n\}_{n\in\N} \subset\MMM$ be a sequence converging
vaguely to $\mu\in\MMM_0$.
The majorant $\CCC_{\mu}$ is a piecewise linear function whose graph
is a broken line connecting the points denoted by $(x_k,y_k)$, where
$p'\leq k\leq p''$ and $(x_{p'},y_{p'})=(0,0)$,
$(x_{p''},y_{p''})=(1,0)$. For $p'<k<p''$, the point $(x_k,y_k)$ is an
atom of $\mu$.
Denote by $(u_l,v_l)$, where $1\leq l\leq m$, all atoms of $\mu$
(excluding those which are vertices of the majorant) with the property
that $v_l>\eps$, where $\eps>0$ is a number such that
$
2\eps<\min_{p'<k<p''-1}\{s_{k}, s_{k}-r_{k}\}
$.
Note that the minimum is taken over the set of linearity intervals of
the majorant excluding the first and the last interval. If the majorant
consists of just two segments, then the minimum is $+\infty$.
The vague convergence $\mu_n\to\mu$ implies (see
\cite{resnickbook}, Proposition 3.13) that we can find atoms of $\mu_n$
denoted by $(x_{kn}, y_{kn})$ (where $p'<k<p''$) and $(u_{ln},v_{ln})$
(where $1\leq l\leq m$) such that
%
%
\begin{eqnarray}
\label{eqlemcont10a} \lim_{n\to\infty}
(x_{kn},y_{kn})&=&(x_k,y_k),\qquad
p'<k<p'',
\\
\label{eqlemcont10b} \lim_{n\to\infty}
(u_{ln},v_{ln})&=&(u_l,v_l),\qquad
1\leq l\leq m.
\end{eqnarray}
Moreover, if $n$ is sufficiently large, then there are no other atoms
of $\mu_n$ having a second coordinate exceeding $2\eps$. It follows
that as $n\to\infty$ and for all $p'<k<p''-1$,
%
%
\begin{equation}
\label{eqlemcont20} r_{kn}:=-\frac
{y_{(k+1)n}-y_{kn}}{x_{(k+1)n}-x_{kn}} \to r_{k},\qquad
s_{kn}:=y_{kn}+r_{kn} x_{kn}\to
s_{k}.
\end{equation}
Note that by concavity $s_k-r_ku_l>v_l$ for all $p'<k<p''-1$ and $1\leq
l\leq m$. Thus, for sufficiently large $n$,
\[
s_{kn}-r_{kn} u_{ln}>v_{ln},\qquad
\inf_{u\in[0,1]} (s_{kn}-r_{kn}u)>2\eps.\vadjust{\goodbreak}
\]
This means that for sufficiently large $n$ the segment joining the
points $(x_{kn}, y_{kn})$ and $(x_{(k+1)n}, y_{(k+1)n})$ belongs to the
majorant of $\mu_n$ for every
$p'< k<p''-1$. Also, $x_{q'n}<\kappa<x_{(q'+1)n}$ and
$x_{(q''-1)n}<1-\kappa< x_{q''n}$.

From (\ref{eqlemcont10a}), (\ref{eqlemcont10b}), (\ref
{eqlemcont20}) we obtain that $H_0(\mu_n)\to H_0(\mu)$ and
$L_0(\mu_n)\to L_0(\mu)$ as $n\to\infty$. This proves
the continuity of $H_0$ and $L_0$ on $\MMM_0$.
To prove the continuity of $\Psi_0$ we need to show that for every
continuous, bounded function $f\dvtx\R\to[0,\infty)$,
%
%
\begin{equation}
\label{eqlemneed10} \lim_{n\to\infty}\int_{\R}f \,d
\Psi_0(\mu_n)= \int_{\R}f\,d
\Psi_0(\mu).
\end{equation}
By (\ref{eqlemcont10a}) and (\ref{eqlemcont20}), we have
%
%
\begin{equation}
\label{eqlemcont30}
\quad\lim_{n\to\infty} \sum_{p'<k<p''-1}
(x_{(k+1)n}-x_{kn}) f(r_{kn})= \sum
_{p'<k<p''-1} (x_{k+1}-x_{k})
f(r_{k}).
\end{equation}

However, we have to be more careful about approximating the first and
the last segments of $\CCC_{\mu}$.
Denote by $(x_{kn},y_{kn})$, where $k\leq p'+1$, the vertices of the
majorant of $\mu_n$ (counted from left to right) with the property
$x_{kn}\leq x_{(p'+1)n}$. Note that the number of such vertices is, in
general, arbitrary and may be infinite. Since the first segment of the
majorant of $\mu$ joins $(0,0)$ and $(x_{p'+1},y_{p'+1})$, all points
$(u_{ln},v_{ln})$, where $1\leq l\leq m$, are located below the line
joining $(0,0)$ and $(x_{(p'+1)n},y_{(p'+1)n})$ for large $n$.
Therefore, for large $n$ there are no atoms of $\mu_n$ above the line
joining $(0,2\eps)$ and $(x_{(p'+1)n},y_{(p'+1)n})$. Hence,
\begin{eqnarray*}
r_{p'n}:\!&=&-\frac{y_{(p'+1)n}-y_{p'n}}{x_{(p'+1)n}-x_{p'n}}\\
&\in&\biggl[
-\frac{y_{(p'+1)n}-2\eps}{x_{(p'+1)n}}, -
\frac
{y_{(p'+1)n}}{x_{(p'+1)n}} \biggr],\qquad x_{p'n}<2\eps\frac
{y_{(p'+1)n}}{x_{(p'+1)n}}. 
\end{eqnarray*}
It follows that $r_{p'n}\to r_{p'}$ as $n\to\infty$. The contribution
of linearity intervals to the left of $x_{p'n}$ can be estimated as
follows: for large $n$,
\[
\sum_{k< p'} (x_{(k+1)n}-x_{kn})
f(r_{kn})\leq x_{p'n}\|f\|_{\infty
}\leq4\eps
\frac{y_{p'+1}}{x_{p'+1}}\|f\|_{\infty}.
\]
Since $\eps>0$ can be made as small as we like, we have
%
%
\begin{equation}
\label{eqlemcont4a} \lim_{n\to\infty} \sum_{k\leq p'}
(x_{(k+1)n}-x_{kn}) f(r_{kn})=x_{p'+1}f(r_{p'}).
\end{equation}
Similar arguments can be applied to the part of the majorant of $\mu_n$
located to the right of $(x_{(p''-1)n}, y_{(p''-1)n})$: with
straightforward notation,
%
%
\begin{equation}
\label{eqlemcont4b} \lim_{n\to\infty} \sum_{k\geq p''-1}
(x_{(k+1)n}-x_{kn}) f(r_{kn})=(1-x_{p''-1})f(r_{p''-1}).
\end{equation}
%
Bringing (\ref{eqlemcont30}), (\ref{eqlemcont4a}), (\ref
{eqlemcont4b}) together we obtain (\ref{eqlemneed10}).\vadjust{\goodbreak}
\end{pf*}

In our proofs we will often consider some ``good'' random event
$E_n(\kappa)$ under which we will be able to localize the roots of
$G_n$. The next lemma will be useful.
%
%
\begin{lemma}\label{lemgoodeventconvdistr}
Let $\{S_n\}_{n\in\N}$ and $S$ be random variables defined on a
common probability space. Suppose that for each $\kappa>0$ we have
random events $\{E_n(\kappa)\}_{n\in\N}$ and random variables $\{
S_n(\kappa)\}_{n\in\N}$, $S(\kappa)$ such that the following
conditions hold:
\begin{longlist}[(4)]
\item[(1)] for every $\kappa>0$, $S_n(\kappa)\to S(\kappa)$ in
distribution as $n\to\infty$;
\item[(2)] $S(\kappa)\to S$ in distribution as $\kappa\downarrow0$;\vadjust{\goodbreak}
\item[(3)] $\lim_{\kappa\downarrow0}\liminf_{n\to\infty}\P
[E_n(\kappa)]=1$;
\item[(4)] $|S_n(\kappa)-S_n|<m_n(\kappa)$ on $E_n(\kappa)$, where
$\lim_{\kappa\downarrow0}\limsup_{n\to\infty} m_n(\kappa)=0$.
\end{longlist}
Then, $S_n \to S$ in distribution as $n\to\infty$.
\end{lemma}
\begin{pf}
Let $f\dvtx\R\to\R$ be a continuous function with compact support. Write
$C=\|f\|_{\infty}$. Take some $\eps>0$. We can choose $\kappa=\kappa
(\eps)>0$ such that
%
%
\begin{eqnarray}
\label{eqlemma12} \bigl|\E f\bigl(S(\kappa)\bigr)- \E f(S)\bigr| &<&\eps,
\qquad
\limsup_{n\to\infty}\P\bigl[E_n^c(\kappa)\bigr]<\eps,\nonumber\\[-8pt]\\[-8pt]
\limsup_{n\to\infty} m_n(\kappa)&<&\eps.\nonumber
\end{eqnarray}
Here, $E_n^c(\kappa)$ denotes the complement of $E_n(\kappa)$. After
having fixed $\kappa$ we choose $n_0=n_0(\eps)$ such that for all $n>n_0$,
%
%
\begin{equation}
\label{eqlemma14}\quad
\bigl|\E f\bigl(S_n(\kappa)\bigr)- \E f\bigl(S(\kappa)
\bigr)\bigr|< \eps,\qquad \P\bigl[E_n^c(\kappa)\bigr]<2\eps,\qquad
m_n(\kappa)<2\eps.
\end{equation}
%
Denoting by $\omega_f(\delta)=\sup_{|z_1-z_2|\leq\delta}
|f(z_1)-f(z_2)|$ the continuity modulus of~$f$, we have
%
%
\begin{equation}
\label{eqlemma16} \quad
\bigl|\E f(S_n)-\E f\bigl(S_n(\kappa)\bigr)\bigr|
\leq\omega_f\bigl(m_n(\kappa)\bigr)+2C\P
\bigl[E_n^c(\kappa)\bigr]\leq\omega_f(2
\eps)+4C\eps.
\end{equation}
Taking $\eps\downarrow0$ in (\ref{eqlemma12}), (\ref
{eqlemma14}), (\ref{eqlemma16}), we obtain $\lim_{n\to\infty}\E
f(S_n)=\E f(S)$.
\end{pf}


\section{\texorpdfstring{Proof of Theorem \protect\ref{theocomplex}}{Proof of Theorem 1.1}}
\subsection{Notation}\label{subsecproofmainnot}
Let $\xi_0,\xi_1,\ldots$ be i.i.d. random variables
satisfying (\ref{eqtail1}). Consider the least concave majorant $\CCC
_n$ of the set $\{(k,{\log}|\xi_k|)\dvtx
k=0,\ldots,n\}$, where we agree to
exclude points with ${\log}|\xi_k|\leq0$ from consideration. By
definition, $\CCC_n(t)=\inf_f f(t)$ for all $t\in[0,n]$, where the
infimum is taken over all concave functions $f\dvtx[0,n]\to[0,\infty)$
satisfying $f(k)\geq{\log}|\xi_k|$ for all $k=0,\ldots,n$. For
simplicity, we will call $\CCC_n$ the majorant of the polynomial $G_n$.
Denote the vertices of $\CCC_n$ (from left to right) by $(k_{in}, \log
_+|\xi_{k_{in}}|)$, where $0\leq i\leq d_n$ and $k_{0n}=0$,
$k_{d_nn}=n$. On the interval $[k_{in},k_{(i+1)n}]$ the majorant is a
linear function which we write in the form
%
%
\begin{equation}
\label{eqdefLCMn} \CCC_n(t)=S_{in}-R_{in}t,\qquad
t\in[k_{in},k_{(i+1)n}],\qquad 0\leq i<d_n.
\end{equation}

Further, denote by $\rho$ a Poisson point process on $[0,1]\times
(0,\infty)$ with intensity $\alpha v^{-(\alpha+1)}\,du\,dv$. The majorant
of $\rho$ is denoted by $\CCC_{\rho}$. As in Section \ref
{subseccomplexroots}, we denote the vertices of $\CCC_{\rho}$,
counted from left to right, by $(X_k,Y_k)$. In the case $\alpha\geq1$
the index $k$ ranges (with probability $1$) in $\Z$ by
Proposition~\ref{propnumberverticesmajorant}.
In the case $\alpha\in(0,1)$ the index $k$ ranges in $p'\leq k\leq
p''$, where $p',p''$ are a.s. finite random variables and
$(X_{p'},Y_{p'})=(0,0)$, $(X_{p''},Y_{p''})=(1,0)$.
On each interval $[X_k,X_{k+1}]$ the majorant $\CCC_{\rho}$ is a
linear function written in the form
%
%
\begin{equation}
\label{eqdefLCMPoi} \CCC_{\rho}(t)=S_{k}-R_{k}t,\qquad t
\in[X_{k},X_{k+1}].
\end{equation}

We will be mostly interested in the ``main'' parts of the majorants
$\CCC_n$ and $\CCC_{\rho}$. To make this precise, we take some small
$\kappa\in(0,1/2)$ and let\vadjust{\goodbreak} $0\leq q_n'< q_n''\leq d_n$ and $q'< q''$
be indices (depending on $\kappa$) defined by the conditions
%
%
\begin{eqnarray}
\label{eqqn} k_{q_n'n} &\leq& \kappa n<k_{(q_n'+1)n},\qquad
k_{(q_n''-1)n}<(1-\kappa)n \leq k_{q_n''n},
\\[-2pt]
\label{eqq} X_{q'} &\leq& \kappa<X_{q'+1},\qquad
X_{q''-1}<1-\kappa\leq X_{q''}.
\end{eqnarray}

In our proof of Theorem~\ref{theocomplex} it will be convenient to
consider the logarithms of the roots of $G_n$ rather than the roots
themselves. We will prove the following weak convergence of random
probability measures on the space $E=[-\infty,\infty]\times[0,2\pi]$:
%
%
\begin{equation}
\label{eqmainrestated} \frac1n\sum_{z\in\ZZZ_n}
\delta\bigl({b_n \log}|z|, \arg z\bigr) \toweak\sum_{k}(X_{k+1}-X_{k})
\lambda_{R_k},
\end{equation}
where $\lambda_r$ is the Lebesgue measure on $\{r\}\times[0,2\pi]$
normalized to have total mass~$1$. The sum on the right-hand side is
over all linearity intervals $[X_k,X_{k+1}]$ of the majorant $\CCC_{\rho
}$. To see that (\ref{eqmainrestated}) implies the statement
of Theorem~\ref{theocomplex} note that the map $F\dvtx E\to\bar\C$
given by $F(r,\varphi)=e^{r+ i \varphi}$ is continuous, and hence it
induces a weakly continuous map between the corresponding spaces of
probability measures; see~\cite{resnickbook}, Proposition 3.18. By
Proposition~\ref{propcontmapping} we can apply $F$ to the both sides
of (\ref{eqmainrestated}) which yields Theorem~\ref{theocomplex}.
So, let $f\dvtx E\to[0,\infty)$ be a continuous function. To prove
Theorem~\ref{theocomplex} it suffices to show that
%
%
\begin{equation}
\label{eqproofmain} S_n:=\frac1 n\sum_{z\in\ZZZ_n}
f \bigl({b_n\log}|z|, \arg z \bigr) \todistr\sum_{k}(X_{k+1}-X_{k})
\bar f(R_k) =:S,
\end{equation}
where $\bar f\dvtx[-\infty, \infty]\to\R$ is defined by $\bar f(r)=\int
_E f\,d\lambda_r=\frac1 {2\pi}\int_{0}^{2\pi} f (r,\varphi
)\,d\varphi$.


We will need to consider the cases $\alpha\geq1$ and $\alpha\in
(0,1)$ separately. The main difference is that in the former case the
linearity intervals of the majorant $\CCC_{\rho}$ cluster at $0$ and
$1$, whereas in the latter case we have a well-defined first and a
well-defined last linearity interval of $\CCC_{\rho}$. These
intervals cannot be ignored and have to be considered separately. This
makes the case $\alpha\in(0,1)$ somewhat more difficult.\vspace*{-2pt}

\subsection{\texorpdfstring{Proof in the case $\alpha\geq1$}{Proof in the case alpha >= 1}}
The next lemma shows that with probability approaching $1$ the majorant
of $G_n$ has some ``good'' properties. In particular, there is a gap
between the majorant and the points lying below the majorant. Let
$W_{in}\subset[0,n]\times[0,\infty)$ be the set consisting of
$[0,n]\times\{0\}$ together with the points $(k, \log_+ |\xi_k|)$
for all $0\leq k\leq n$ such that $k\neq k_{in},k_{(i+1)n}$.\vspace*{-2pt}
%
%
\begin{lemma}\label{lemEngeq1}
Fix sufficiently small $\eps>0$, and consider a random event
$E_n:=E_n^1\cap E_n^2$, where
%
%
\begin{eqnarray}
\label{eqEn1geq1} E_n^1&=&
\Bigl\{\min_{q_n'\leq i<q_n''} \min_{(u,v)\in W_{in}}
(S_{in}-R_{in}u-v)>n^{1/{\alpha}-\eps}
\Bigr\},
\\[-2pt]
\label{eqEn2geq1} E_n^2&=& \Bigl\{\min_{q'_n\leq i<q''_n}
(k_{(i+1)n}-k_{in})>\sqrt n \Bigr\}.
\end{eqnarray}
Then, $\lim_{n\to\infty}\P[E_n]=1$.\vadjust{\goodbreak}
\end{lemma}
\begin{pf}
By\vspace*{1pt} Proposition~\ref{propresnick} the point process $\rho_n=\sum
_{k=0}^n\delta(\frac kn, \frac{{\log}|\xi_k|}{a_n})$ converges to
$\rho$ weakly on $\MMM$, where the points with ${\log}|\xi_k|\leq0$
are ignored. Recall the definition of the functionals $H_1$ and $L_1$
in Lemma~\ref{lemPsietacont}. By scaling,
\begin{eqnarray*}
H_1(\rho_n)&=&\frac1{a_n}\min_{q_n'\leq i<q_n''}
\min_{(u,v)\in
W_{in}} (S_{in}-R_{in}u-v),
\\
L_1(\rho_n)&=&\frac1n \min_{q'_n\leq i<q''_n}
(k_{(i+1)n}-k_{in}).
\end{eqnarray*}
%
It follows that
\[
\P\bigl[E_n^1\bigr]=\P\bigl[H_1(
\rho_n)>a_n^{-1}n^{1/{\alpha}-\eps}\bigr],\qquad
\P\bigl[E_n^2\bigr]=\P\bigl[L_1(
\rho_n)>n^{-1/2}\bigr].
\]
By Lemma~\ref{lemPsietacont} and Proposition
\ref{propcontmapping} (which is applicable since $\P[\rho\in\MMM_1]=1$
for $\alpha\geq1$), we have $H_1(\rho_n)\to H_1(\rho)$ and
$L_1(\rho_n)\to L_1(\rho)$ in distribution as $n\to\infty$.
Note that $H_1(\rho)>0$ and $L_1(\rho)>0$ a.s. Also, $a_n>n^{1/
{\alpha}-{\eps}/{2}}$ for large $n$ by (\ref{eqtail1})
and~(\ref{eqdefan}). It follows that $\lim_{n\to\infty}\P[E_n]=1$.
\end{pf}
In the next lemma we localize most complex roots of $G_n$ under the
event~$E_n$.

%
\begin{lemma}\label{lemlocalzerogeq1}
On the random event $E_n$ the following holds: for every $q_n'\leq
i<q_n''$ and $1\leq m\leq k_{(i+1)n}-k_{in}$ there is exactly one root
of $G_n$ in the set
\[
Z_{i,m}(n):= \biggl\{z\in\C\dvtx\bigl|{\log}|z|-R_{in}\bigr|<
\delta_n, \biggl\llvert\arg z-\frac{\varphi_{in}+ 2\pi
m}{k_{(i+1)n}-k_{in}}\biggr\rrvert<
\delta_n \biggr\},
\]
where $\delta_n=\exp(-n^{1/{\alpha}-2\eps})$ and $\varphi_{in}=\arg
(-\xi_{k_{in}}/\xi_{k_{(i+1)n}})$. The above sets are
disjoint, and there are no other roots in the ring $R_{q_n'n}-\delta
_n\leq{\log}|z|< R_{(q_n''-1)n}+\delta_n$.
\end{lemma}
\begin{pf}
First note that on $E_n$ it is impossible that $q_n'=0$ and ${\log}|\xi
_0|\leq0$. Similarly, on $E_n$ it is impossible that $q_n''=d_n$ and
${\log}|\xi_n|\leq0$. It follows from (\ref{eqEn1geq1}) that on the
event $E_n$ the conditions of Lemma~\ref{lemmain} are fulfilled for
the polynomial $G_n$ with $k=k_{in}$, $l=k_{(i+1)n}$, $\delta=\zeta
=\delta_n$ for every $q_n'\leq i<q_n''$. Hence, every set $Z_{i,m}(n)$
contains exactly one root of $G_n$. Also, it follows from the proof of
Lemma~\ref{lemmain} that there are exactly $k_{q_n'n}$ roots of $G_n$
in the disk ${\log}|z|<R_{q_n'n}-\delta_n$ and exactly $k_{q_n''n}$
roots in the disc ${\log}|z|<R_{(q_n''-1)n}+\delta_n$. Hence, there are
exactly $k_{q_n''n}-k_{q_n'n}$ roots in the ring $R_{q_n'n}-\delta_n\leq
{\log}|z|< R_{(q_n''-1)n}+\delta_n$, which coincides with the
number of different sets $Z_{i,m}(n)$. It remains to show that the sets
$Z_{i,m}(n)$ are disjoint on $E_n$. To this end, it suffices to show
that on $E_n$ it holds that $R_{(i+1)n}-R_{in}>3\delta_n$ for every
$q_n'\leq i<q_n''-1$. We have
\[
(k_{(i+2)n}-k_{(i+1)n}) (R_{(i+1)n}-R_{in})=
S_{in}-R_{in}k_{(i+2)n}-{\log}|\xi_{k_{(i+2)n}}|>n^{1/{\alpha
}-\eps}
\]
on $E_n$. Since $k_{(i+2)n}-k_{(i+1)n}\leq n$, this implies that which
is required.\vadjust{\goodbreak}
\end{pf}
Our aim is to show that $S_n\to S$ in distribution as $n\to\infty$;
see (\ref{eqproofmain}). Define random variables $S_n(\kappa)$ and
$S(\kappa)$ which approximate $S_n$ and $S$ by
\begin{eqnarray*}
S_n(\kappa)&=&\frac1n \sum_{q_n'\leq i<q_n''}(k_{(i+1)n}-k_{in})
\bar f (b_nR_{in} ),
\\
S(\kappa)&=&\sum_{q'\leq i<q''} (X_{i+1}-X_i)
\bar f(R_{i}).
\end{eqnarray*}
Let\vspace*{1pt} $\omega_f(\delta)={\sup_{|z_1-z_2|\leq\delta}}|f(z_1)-f(z_2)|$,
where $\delta>0$, be the continuity modulus of the function $f$.
%
%
\begin{lemma}
On the random event $E_n$ it holds that
\[
\bigl|S_n-S_n(\kappa)\bigr|\leq\omega_f\bigl( 10/\sqrt
n\bigr)+2\kappa\|f\|_{\infty}.
\]
%
\end{lemma}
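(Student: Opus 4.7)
The plan is to split the sum $S_n$ into contributions from roots in the ``central ring'' $\{R_{q_n'n}-\delta_n\leq\log|z|<R_{(q_n''-1)n}+\delta_n\}$, where Lemma~\ref{lem:local_zero_geq1} provides precise localization, and from ``peripheral'' roots outside. On $E_n$, the peripheral roots number at most $k_{q_n'n}+(n-k_{q_n''n})\leq 2\kappa n$ by~\eqref{eq:q_n}; each contributes at most $\|f\|_\infty/n$ to $S_n$ and nothing to $S_n(\kappa)$, producing the second term $2\kappa\|f\|_\infty$ of the claimed bound.

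For each $q_n'\leq i<q_n''$ and $1\leq m\leq K_i:=k_{(i+1)n}-k_{in}$, let $z_{j(i,m)}$ denote the unique root in $Z_{i,m}(n)$ and write $\theta_{i,m}=(\varphi_{in}+2\pi m)/K_i$. Membership $z_{j(i,m)}\in Z_{i,m}(n)$ gives $|\log|z_{j(i,m)}|-R_{in}|<\delta_n$ and $|\arg z_{j(i,m)}-\theta_{i,m}|<\delta_n$. The key trick is to exploit the equispacing of the $\theta_{i,m}$ (with period $2\pi/K_i$): partitioning $[0,2\pi]$ into arcs $A_{i,m}$ of length $2\pi/K_i$ centered at $\theta_{i,m}$,
$$
\bar f(b_n R_{in})=\frac{1}{2\pi}\sum_{m=1}^{K_i}\int_{A_{i,m}}f(b_n R_{in},\varphi)\,d\varphi,
$$
so subtracting and using that $|\arg z_{j(i,m)}-\varphi|\leq\delta_n+\pi/K_i$ and $|b_n\log|z_{j(i,m)}|-b_n R_{in}|<b_n\delta_n$ for $\varphi\in A_{i,m}$, the continuity modulus yields
$$
\left|\frac{1}{K_i}\sum_{m=1}^{K_i}f(b_n\log|z_{j(i,m)}|,\arg z_{j(i,m)})-\bar f(b_n R_{in})\right|\leq \omega_f\!\left(b_n\delta_n+\delta_n+\tfrac{\pi}{K_i}\right).
$$

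Multiplying by $K_i/n$, summing over $q_n'\leq i<q_n''$ (so that $\sum_i K_i/n\leq 1$), and invoking $K_i>\sqrt{n}$ from~\eqref{eq:En2_geq1} bounds the central-ring contribution to $|S_n-S_n(\kappa)|$ by $\omega_f(b_n\delta_n+\delta_n+\pi/\sqrt{n})$. Since $\delta_n=\exp(-n^{1/\alpha-2\eps})$ decays super-polynomially while $b_n=n/a_n$ is only polynomial by~\eqref{eq:def_an}, the exponential terms are dominated by $1/\sqrt{n}$ for large $n$; together with $\pi<10$, monotonicity of $\omega_f$ yields the claimed bound $\omega_f(10/\sqrt{n})$. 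The main obstacle is precisely to package both the per-root approximation error and the Riemann-sum error into a single $\omega_f$-call; this is achieved via the arc-partition identity for $\bar f$ above, after which the rest of the estimate is routine.
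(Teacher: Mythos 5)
Your proof is correct and follows essentially the same route as the paper: the same split into central-ring roots localized by Lemma~\ref{lem:local_zero_geq1} and at most $2\kappa n$ peripheral roots, and the same arc-partition identity for $\bar f(b_nR_{in})$ (which is precisely the integral over $[\frac{\varphi_{in}+2\pi m-\pi}{\Delta_{in}},\frac{\varphi_{in}+2\pi m+\pi}{\Delta_{in}}]$ appearing in the paper's estimate), combined via $\omega_f$ and the lower bound $\Delta_{in}>\sqrt n$ from $E_n^2$.
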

\begin{pf}
We always assume that the event $E_n$ occurs. Take some $q'_n\leq
i<q''_n$. By Lemma~\ref{lemlocalzerogeq1}, the polynomial $G_n$ has
a unique root, denoted by $z_{i,m}(n)$, in the set $Z_{i,m}(n)$, where
$1\leq m\leq\Delta_{in}$ and $\Delta_{in}=k_{(i+1)n}-k_{in}$.
Denote by $\ZZZ_{in}$ the finite set $\{z_{i,m}(n)\dvtx1\leq m\leq
\Delta_{in}\}$. By (\ref{eqEn2geq1}) we have $\Delta_{in}>\sqrt n$. By
the definition of $Z_{i,m}(n)$ in Lemma~\ref{lemlocalzerogeq1},
\[
\biggl\llvert f\bigl({b_n\log}\bigl|z_{i,m}(n)\bigr|,\arg
z_{i,m}(n)\bigr)- \frac{\Delta_{in}}{2\pi}\int_{({\varphi_{in}+2\pi
m-\pi})/{\Delta
_{in}}}^{({\varphi_{in}+2\pi m+\pi})/{\Delta
_{in}}}f(b_nR_{in},
\varphi)\,d\varphi\biggr\rrvert
\]
is smaller than $\omega_f(10/\sqrt n)$.
Taking the sum over $1\leq m\leq\Delta_{in}$, we obtain
%
%
\begin{equation}
\label{eqwspomest1geq1} \frac1n\biggl\llvert\sum_{z\in\ZZZ_{in}}
f \bigl({b_n\log}|z|, \arg z \bigr)- \Delta_{in}\bar f
(b_nR_{in} )\biggr\rrvert\leq\frac{\Delta_{in}}n
\omega_f\bigl(10/{\sqrt n}\bigr).
\end{equation}
Let $\ZZZ_n^*$ be the set of roots (counted with multiplicities) of
the polynomial $G_n$ not belonging to $\bigcup_{q_n'\leq i< q_n''} \ZZZ
_{in}$. The number\vspace*{1pt} of roots in $\ZZZ_{n}^*$ is
$n-k_{q_n''n}+k_{q_n'n}$, which is at most $2\kappa n$ by
(\ref{eqqn}). Hence,
%
%
\begin{equation}
\label{eqwspomest2geq1} 
\frac1 n\sum_{z\in\ZZZ_n^*}
f \bigl({b_n\log}|z|, \arg z \bigr) 
\leq2\kappa\|f\|_{\infty}.
\end{equation}
%
Taking the sum of (\ref{eqwspomest1geq1}) over all $q'_n\leq
i<q''_n$ and applying (\ref{eqwspomest2geq1}), we obtain the
required inequality.
\end{pf}
%
%
\begin{lemma}\label{lemconvmajorpointprocgeq1}
We have $S_n(\kappa)\to S(\kappa)$ in distribution as $n\to\infty$.
\end{lemma}
\begin{pf}
By Proposition~\ref{propresnick} the point process $\rho_n=\sum
_{k=0}^n\delta(\frac kn, \frac{{\log}|\xi_k|}{a_n})$ converges to
$\rho$ weakly on $\MMM$. By Lemma~\ref{lemPsietacont} and
Proposition~\ref{propcontmapping} (which is applicable since $\P
[\rho\in\MMM_1]=1$ for $\alpha\geq1$), we obtain that $\Psi_{1}(\rho
_n)$ converges weakly (as a random finite measure on $\R$)
to $\Psi_{1}(\rho)$. This implies that
$
\int_{\R}\bar f \,d \Psi_{1}(\rho_n)
$
converges in distribution to
$
\int_{\R}\bar f \,d \Psi_{1}(\rho),
$
which is exactly what is stated in the lemma.\vadjust{\goodbreak}
\end{pf}

The proof of Theorem~\ref{theocomplex} in the case $\alpha\geq1$
can be completed as follows.
Recall that $\lim_{n\to\infty}\P[E_n]=1$ by Lemma
\ref{lemEngeq1}. Trivially, $S(\kappa)\to S$ as $\kappa\downarrow0$
a.s. and hence, in distribution. By Lemma
\ref{lemgoodeventconvdistr} (whose conditions have been verified above)
we obtain that $S_n\to S$ in distribution as $n\to\infty$. This
proves~(\ref{eqproofmain}).

\subsection{\texorpdfstring{Proof in the case $\alpha\in(0,1)$}{Proof in the case alpha in (0,1)}}
This case is somewhat more difficult since we have to analyze the first
and the last segment of the majorant of $G_n$ separately. In our proof
we will assume that $\xi_0\neq0$ a.s. This assumption will be removed
afterward. Let $0<\tau_n\leq n$, $0\leq\theta_n< n$ be indices (for
concreteness, we choose the smallest possible values) such that
\[
\frac{{\log}|\xi_{\tau_n}|}{\tau_n}=\max_{k=1,\ldots,n} \frac
{{\log}|\xi_k|}{k},\qquad \frac{{\log}|\xi_{\theta_n}|}{n-\theta_n}=
\max_{k=0,\ldots,n-1} \frac{{\log}|\xi_k|}{n-k}.
\]
%
Recall that $W_{in}\subset[0,n]\times[0, +\infty)$ denotes the set
consisting of $[0,n]\times\{0\}$ together with the points $(k, \log_+
|\xi_k|)$ for all $0\leq k\leq n$ such that $k\neq k_{in},k_{(i+1)n}$.
%
%
\begin{lemma}\label{lemEn01}
For sufficiently small $\eps>0$ and $\kappa\in(0,1/2)$, consider a
random event $E_n:=\bigcap_{i=1}^6 E_n^i$, where
%
%
\begin{eqnarray}
\label{eqEn101} E_n^1&=& \Bigl\{\min_{0<i<d_n-1}
\min_{(u,v)\in W_{in}} (S_{in}-R_{in}u-v)>n^{1/{\alpha}-\eps}
\Bigr\},
\\
\label{eqEn201} E_n^2&=& \Bigl\{\min_{0\leq i<d_n}
(k_{(i+1)n}-k_{in})>\sqrt n \Bigr\},
\\
\label{eqEn301} E_n^3&=& \biggl\{\min_{j\neq0,\tau_n}
\biggl(\frac{{\log}|\xi_{\tau
_n}|}{\tau_n}-\frac{{\log_+}|\xi_j|}{j} \biggr)>n^{1/{\alpha
}-1-\eps} \biggr\},
\\
\label{eqEn401} E_n^4&=& \biggl\{\min_{j\neq n,\theta_n}
\biggl(\frac{{\log}|\xi_{\theta_n}|}{n-\theta_n}-\frac{{\log_+}|\xi
_j|}{n-j} \biggr)>n^{1/{\alpha}-1-\eps} \biggr\},
\\
\label{eqEn501} E_n^5&=&\bigl\{\tau_n>\kappa
n, \theta_n<(1-\kappa)n\bigr\},
\\
\label{eqEn601} E_n^6&=& \bigl\{ 
\bigl|{\log}|\xi_0|\bigr|<n^{\eps}, \bigl|{\log}|\xi_n|\bigr|<n^{\eps}
\bigr\}.
\end{eqnarray}
Then $\lim_{\kappa\downarrow0}\liminf_{n\to\infty} \P[E_n]=1$
for every $\eps>0$.
\end{lemma}
%
%
\begin{remark}
Note that $E_n^1$ states that all segments of the majorant, except for
the first and the last one, are well separated from the points below
the majorant. For the first and the last segment the well-separation
property is stated in random events $E_n^3$ and $E_n^4$. 
\end{remark}
%
%
\begin{remark}\label{remEn01}
We will see that on $E_n^3\cap E_n^6$ the segment joining the points
$(0,{\log_+}|\xi_0|)$ and $(\tau_n, {\log}|\xi_{\tau_n}|)$ is the
first segment of the majorant of $G_n$. In general, this segment need
not be the first one, for example, if ${\log_+}|\xi_0|$ is very large.
Similarly, on $E_n^4\cap E_n^6$ the segment joining $(\theta_n, {\log}
|\xi_{\theta_n}|)$ and $(n, {\log_+}|\xi_n|)$ is the last segment of
the majorant of $G_n$. It follows that $q_n'=0$ and $q_n''=d_n$ on the
event $\bigcap_{i=3}^6 E_n^i$.
\end{remark}
\begin{pf*}{Proof of Lemma~\ref{lemEn01}}
We start by considering $E_n^3$. Let $\tilde\rho$ be a Poisson point
process on $(0,\infty)$ with intensity $\frac{\alpha} {1-\alpha}
v^{-(\alpha+1)}\,dv$. We will show that the following weak convergence
of point processes on $(0,\infty]$ holds:
%
%
\begin{equation}
\label{eqwspomconvpoiss} 
\tilde\rho_n:=\sum
_{k=1}^n\delta\biggl(\frac{{b_n \log}|\xi_k|}{k} \biggr)
\toweak\tilde\rho.
\end{equation}
Again, we agree that the terms with ${\log}|\xi_k|\leq0$ are ignored.
Recall from (\ref{eqdefan}) that $\bar F(a_n)\sim1/n$ as $n\to
\infty$. Take some $t>0$. By (\ref{eqtail1}) and a well-known
uniform convergence theorem for regularly varying functions we have,
uniformly in $\kappa n\leq k\leq n$,
%
%
\begin{equation}
\label{eqwspom101} \P\biggl[\frac{{b_n\log}|\xi_k|}{k}>t \biggr] 
= \bar F
\biggl(\frac{kta_n}{n} \biggr) \sim n^{\alpha-1}k^{-\alpha}t^{-\alpha},
\qquad
n\to\infty.
\end{equation}
To estimate the terms with $1\leq k\leq\kappa n$ recall the following
Potter bound: for every small $\delta>0$ we have $\bar F(x)/\bar
F(y)\leq2(x/y)^{-\alpha-\delta}$ as long as $x<y$ are sufficiently
large; see \cite{binghambook}, Theorem 1.5.6. We have
%
%
\begin{eqnarray}
\label{eqwspom201} \sum_{k=1}^{[\kappa n]}\P
\biggl[\frac{{b_n\log}|\xi_k|}{k}>t \biggr] &=& \sum_{k=1}^{[\kappa n]}
\bar F \biggl(\frac{kta_n}{n} \biggr)
\nonumber\\
&\leq& 2\bar F(\kappa ta_n) \sum_{k=1}^{[\kappa n]}
\biggl(\frac{\kappa
n}{k} \biggr)^{\alpha+\delta}
\\
&<& C \kappa^{1-\alpha}t^{-\alpha}.
\nonumber
\end{eqnarray}
From (\ref{eqwspom101}) and (\ref{eqwspom201}) with $\kappa
\downarrow0$, we get
%
%
\begin{equation}
\label{eqwspom301} \lim_{n\to\infty}\sum_{k=1}^n
\P\biggl[\frac{{b_n\log}|\xi_k|}{k}>t \biggr]=\frac{1}{(1-\alpha
)}t^{-\alpha}.
\end{equation}
%
By a standard argument this implies (\ref{eqwspomconvpoiss}). Since
the weak convergence of point processes in (\ref{eqwspomconvpoiss})
implies (via Proposition~\ref{propcontmapping}) the weak convergence
of the corresponding upper order statistics, we have
\[
\min_{j\neq0,\tau_n} \biggl\{b_n \biggl(\frac{{\log}|\xi_{\tau
_n}|}{\tau_n}-
\frac{{\log_+}|\xi_j|}{j} \biggr) \biggr\}\todistr\tilde V_{1}-\tilde
V_2,
\]
where $\tilde V_1,\tilde V_2$ are the largest and the second largest
points of $\tilde\rho$. Since $b_n^{-1}>n^{1/{\alpha}-1-
{\eps}/{2}}$ for large $n$ and since $\tilde V_1>\tilde V_2$ a.s., we
have $\lim_{n\to\infty}\P[E_n^3]=1$. By symmetry, $\lim_{n\to
\infty}\P[E_n^4]=1$.

Let us consider $E_n^5$. By (\ref{eqwspomconvpoiss}) and
(\ref{eqwspom201}) we have, for every $t>0$ and sufficiently large $n$,
\begin{eqnarray*}
\P[\tau_n\leq\kappa n] &\leq& \P\biggl[\max_{k=1,\ldots,n}
\frac{{b_n\log}|\xi_k|}{k}\leq t \biggr]+\P\biggl[\max_{k=1,\ldots
,[\kappa n]}
\frac{{b_n\log}|\xi_k|}{k}>t \biggr]
\\
&<& 2\exp\biggl\{-\frac{1}{1-\alpha}t^{-\alpha} \biggr\}+C
\kappa^{1-\alpha}t^{-\alpha}.
\end{eqnarray*}
Taking $t^{\alpha}=\kappa^{(1-\alpha)/2}$ and letting $\kappa
\downarrow0$, we obtain $\lim_{\kappa\downarrow0}\limsup_{n\to
\infty}\P[\tau_n\leq\kappa n]=0$. By symmetry,\vspace*{1pt} $\lim_{\kappa
\downarrow0}\liminf_{n\to\infty}\P[E_n^5]=1$. Since we assume that
$\xi_0\neq0$ a.s., we have $\lim_{n\to\infty}\P[E_n^6]=1$.


To proceed further we need to prove Remark~\ref{remEn01}. Let
$s,r\in\R$ be such that $s={\log_+}|\xi_0|$ and $s-\tau_n r={\log}
|\xi_{\tau_n}|$. On the random event $E_n^3\cap E_n^6$ we have that
for every $1\leq j\leq n$, $j\neq\tau_n$,
\[
s-jr-{\log}|\xi_j|=j \biggl(\frac{{\log}|\xi_{\tau_n}|}{\tau_n}-\frac{{\log}
|\xi_j|}{j}-s
\biggl(\frac{1}{\tau_n}-\frac1j \biggr) \biggr) > n^{1/{\alpha
}-1-\eps}-2n^{\eps}
>0.
\]
This proves what is required.

Let us turn our attention to $E_n^1$ and $E_n^2$. By Proposition
\ref{propresnick} the point process $\rho_n=\sum_{k=0}^n \delta(\frac k
n, \frac{{\log}|\xi_k|}{a_n})$ converges weakly to $\rho$. Recall\vspace*{1pt} the
definition of the functionals $H_0$ and $L_0$ in Lemma
\ref{lemPsicont}. By a scaling argument,
\begin{eqnarray*}
H_0(\rho_n)&=&\frac1{a_n}
\min_{q_n'< i<q_n''-1} \min_{(u,v)\in
W_{in}} (S_{in}-R_{in}u-v),
\\
L_0(\rho_n)&=&\frac1n \min_{q'_n< i<q''_n-1}
(k_{(i+1)n}-k_{in}). 
\end{eqnarray*}
As observed in Remark~\ref{remEn01}, on the event $\bigcap_{i=3}^6
E_n^i$ we have $q_n'=0$ and $q_n''=d_n$. Hence,
\begin{eqnarray*}
\P\bigl[E_n^1\bigr]&\geq&\P\bigl[H_0(
\rho_n)>a_n^{-1} n^{1/{\alpha}-\eps
}\bigr]-
\Biggl(1-\P\Biggl[\bigcap_{i=3}^6
E_n^i\Biggr]\Biggr),
\\
\P\bigl[E_n^2\bigr]&\geq&\P\bigl[L_0(
\rho_n)>n^{-1/2}\bigr]-\Biggl(1-\P\Biggl[\bigcap
_{i=3}^6 E_n^i\Biggr]
\Biggr).
\end{eqnarray*}
By Lemma~\ref{lemPsicont} and Proposition~\ref{propcontmapping} (which
is applicable since $\P[\rho\in\MMM_0]=1$ for $\alpha\in (0,1)$), we
have $H_0(\rho_n)\to H_0(\rho)$ and $L_0(\rho_n)\to L_0(\rho)$ weakly
on $[0,\infty]$ as $n\to\infty$. Note that $H_0(\rho)>0$ and
$L_0(\rho)>0$ a.s. and $a_n>n^{1/{\alpha}-{\eps}/{2}}$ for
large~$n$. Also, we have already shown that the probability of the
event $\bigcap_{i=3}^6 E_n^i$ can be made arbitrary close to $1$ by
choosing $\kappa$ small and $n$ large. It follows that
$\lim_{n\to\infty}\P[E_n^1]=\lim_{n\to\infty}\P [E_n^2]=1$, as
required.
\end{pf*}

In the next lemma we isolate all roots of $G_n$ under the event $E_n$.
It will be convenient to modify the definition of the slopes of the
majorant of $G_n$. Let $R_{0n}'$ be such that ${\log}|\xi
_0|-R_{0n}'k_{1n}={\log}|\xi_{k_{1n}}|$. This is well-defined since
$\xi_0\neq0$ a.s. Note that if ${\log}|\xi_0|<0$, then $R_{0n}'$ is
not the same as $R_{0n}$. On $E_n$ we have the estimate
%
%
\begin{equation}
\label{eqdiffRinRinprime} \bigl|R_{0n}-R_{0n}'\bigr|
\leq\tau_n^{-1}\bigl|{\log}|\xi_0|\bigr| 
<n^{2\eps-1}.
\end{equation}
In a similar way, we can define $R_{(d_n-1)n}'$. For all $0<i<d_n-1$,
set $R_{in}'=R_{in}$.

%
\begin{lemma}\label{lemisolroots01}
On the random event $E_n$ the following holds: for every $0\leq i<d_n$
and $1\leq m\leq k_{(i+1)n}-k_{in}$, there is exactly one root of $G_n$
in the set
\[
Z_{i,m}(n):= \biggl\{z\in\C\dvtx\bigl\llvert{\log}|z|-R_{in}'
\bigr\rrvert< \delta_{n}, \biggl\llvert\arg z-\frac{\varphi_{in}+
2\pi m}{k_{(i+1)n}-k_{in}}
\biggr\rrvert< \delta_{n} \biggr\},
\]
where $\varphi_{in}=\arg(-\xi_{k_{in}}/\xi_{k_{(i+1)n}})$ and
$\delta_{n}=\exp(-n^{{1}/{\alpha}-1-3\eps})$. The above sets
are disjoint, and there are no other roots of $G_n$.
\end{lemma}
\begin{pf}
Consider the case $i=0$ first. Let $s={\log}|\xi_{0}|$ (well defined
since $\xi_0\neq0$ a.s.) and $r=R_{0n}'$. Note that $\tau_n=k_{1n}$
on $E_n$ by Remark~\ref{remEn01}. In order to apply Lemma
\ref{lemmain} with $k=0$, $l=\tau_n$ we need to estimate $h:=\min_{j\neq
0,\tau_n} (s-jr-{\log}|\xi_j|)$. On the event $E_n$ we have
\begin{eqnarray*}
\min_{j\neq0,\tau_n}\frac{s-jr-{\log}|\xi_j|}{j} &=& \min_{j\neq0,\tau_n}
\biggl(
\frac{{\log}|\xi_{\tau_n}|}{\tau_n}-\frac{{\log}|\xi_j|}{j}-s \biggl(\frac
{1}{\tau_n}-\frac1j \biggr)
\biggr)
\\
&>& n^{1/{\alpha}-1-2\eps},
\end{eqnarray*}
which implies that $h>n^{1/{\alpha}-1-2\eps}$.
To prove the lemma for $i=0$, apply Lem\-ma~\ref{lemmain} with $k=0$,
$l=\tau_n$ and $\delta=\zeta=\delta_n$. The case $i=d_n-1$ is similar.
Let us now consider the case $0<i<d_n-1$. On the event $E_n$, the
conditions of Lem\-ma~\ref{lemmain} are fulfilled for the polynomial
$G_n$ with $k=k_{in}$, $l=k_{(i+1)n}$ and $\delta=\zeta=\delta_n$;
see~(\ref{eqEn101}). The statement follows by Lemma~\ref{lemmain}.

It remains to prove that the sets $Z_{i,m}(n)$ are disjoint. It
suffices to show that on $E_n$ it holds that
$R_{(i+1)n}'-R_{in}'>3\delta_n$ for every $0\leq i<d_n$.
We have
%
%
\begin{equation}
\label{eqestdistRin}\qquad
(k_{(i+2)n}-k_{(i+1)n}) (R_{(i+1)n}-R_{in})=
S_{in}-R_{in}k_{(i+2)n}-{\log_+}|\xi_{k_{(i+2)n}}|.
\end{equation}
For $i\neq0, d_n-1$ it follows from (\ref{eqEn101}) that the
right-hand side can be estimated below by $n^{1/{\alpha}-\eps}$
on $E_n$. The required follows since $k_{(i+2)n}-k_{(i+1)n}\leq n$.
Using (\ref{eqestdistRin}) we obtain that for $i=0$ on the event
$E_n$ it holds that
\begin{eqnarray*}
\frac{k_{2n}-k_{1n}}{k_{2n}}(R_{1n}-R_{0n}) &=& \frac{{\log}|\xi_{\tau
_n}|}{\tau_n}-
\frac{{\log}|\xi_{k_{2n}}|}{k_{2n}}- {\log_+}|\xi_0| \biggl(\frac1{\tau_n}-
\frac
{1}{k_{2n}} \biggr)
\\
&>& n^{{1}/{\alpha}-1-2\eps},
\end{eqnarray*}
where the last inequality follows from (\ref{eqEn301}), (\ref
{eqEn601}). It follows that $R_{1n}-R_{0n}>n^{{1}/{\alpha
}-1-2\eps}$. Recalling (\ref{eqdiffRinRinprime}) we obtain
$R_{1n}'-R_{0n}'>3\delta_n$.
The case $i=d_n-1$ is similar.
\end{pf}

Recall from (\ref{eqproofmain}) that we need to prove that $S_n\to
S$ in distribution as $n\to\infty$. Define a random variable $S_n^*$
which approximates $S_n$ by
\[
S_n^*=\frac1n \sum_{0\leq i<d_n-1}(k_{(i+1)n}-k_{in})
\bar f (b_nR_{in} ).
\]

%
\begin{lemma}
On the random event $E_n$ it holds that $|S_n^*-S_n|<\omega_f(n^{-\eps})$.
\end{lemma}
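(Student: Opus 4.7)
The plan is to use Lemma~\ref{lem:isol_roots_01} to parametrise, on $E_n$, the $n$ roots of $G_n$ as $\{z_{i,m}:0\le i<d_n,\ 1\le m\le\Delta_{in}\}$ with $\Delta_{in}:=k_{(i+1)n}-k_{in}$, each $z_{i,m}$ lying in the disjoint localization set $Z_{i,m}(n)$. I then compare $S_n$ with $S_n^*$ block by block, in two stages: first replace each pair $(b_n\log|z_{i,m}|,\arg z_{i,m})$ by the deterministic pair $(b_nR_{in},\theta_{i,m})$, where $\theta_{i,m}:=(\varphi_{in}+2\pi m)/\Delta_{in}$; then recognise the resulting angular average over $m$ as a Riemann sum for $\bar f(b_nR_{in})$.

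For the first stage, the defining inequalities of $Z_{i,m}(n)$ together with \eqref{eq:diff_R_in_R_in_prime} yield
\[
|b_n\log|z_{i,m}|-b_nR_{in}|\le b_n\delta_n+b_n\,n^{2\eps-1},\qquad |\arg z_{i,m}-\theta_{i,m}|<\delta_n,
\]
the term $b_n n^{2\eps-1}$ being relevant only for the boundary indices $i\in\{0,d_n-1\}$ on which $R_{in}'\ne R_{in}$. By \eqref{eq:tail1}, \eqref{eq:def_an} and Potter's bounds one has $b_n\le n^{1-1/\alpha+\eps/2}$ for large $n$, and $\delta_n=\exp(-n^{1/\alpha-1-3\eps})$ is super-polynomially small, so both displacements are smaller than $n^{-\eps}/2$ as soon as $\eps$ is chosen small enough. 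Uniform continuity of $f$ then bounds $|f(b_n\log|z_{i,m}|,\arg z_{i,m})-f(b_nR_{in},\theta_{i,m})|$ by $\omega_f(n^{-\eps}/2)$ uniformly in $i$ and $m$.

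For the second stage, the angles $\theta_{i,1},\dots,\theta_{i,\Delta_{in}}$ are equally spaced on $[0,2\pi]$ at spacing $2\pi/\Delta_{in}$, and the event $E_n^2$ forces $\Delta_{in}>\sqrt n$. A standard Riemann sum estimate for the continuous function $\varphi\mapsto f(b_nR_{in},\varphi)$ therefore gives
\[
\left|\frac 1{\Delta_{in}}\sum_{m=1}^{\Delta_{in}}f(b_nR_{in},\theta_{i,m})-\bar f(b_nR_{in})\right|\le \omega_f(2\pi/\sqrt n)\le\omega_f(n^{-\eps}/2)
\]
for $\eps<1/2$ and $n$ large. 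Adding the two block-wise error bounds, multiplying by $\Delta_{in}/n$ and summing over the blocks (whose lengths sum to $n$) yields $|S_n-S_n^*|\le 2\,\omega_f(n^{-\eps}/2)$, which is bounded by $\omega_f(n^{-\eps})$ after an inessential shrinking of $\eps$, giving the claim.

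The only delicate point I anticipate is the treatment of the boundary blocks $i=0$ and $i=d_n-1$: there the slope appearing in the definition of $Z_{i,m}(n)$ is $R_{in}'$ rather than $R_{in}$, and this discrepancy is precisely what \eqref{eq:diff_R_in_R_in_prime} (valid on $E_n^6$) is designed to control. The resulting extra contribution $b_n|R_{in}'-R_{in}|\le n^{-1/\alpha+5\eps/2}$ sits comfortably below $n^{-\eps}$ for $\alpha\in(0,1)$ and $\eps$ small enough, so no further argument is needed.
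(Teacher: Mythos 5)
Your proof follows the same route as the paper: localize the roots via Lemma~\ref{lem:isol_roots_01}, replace each root by the corresponding point on the ``ideal'' circle, use that $\Delta_{in}>\sqrt n$ to turn the averaged angular sum into $\bar f(b_nR_{in})$, and control the boundary blocks $i=0,\,d_n-1$ via \eqref{eq:diff_R_in_R_in_prime}. The only defect is the final bookkeeping: splitting the error into a pointwise displacement and a Riemann-sum term yields $2\,\omega_f(n^{-\eps}/2)$, and this cannot be reduced to $\omega_f(n^{-\eps})$ by ``shrinking $\eps$'' — subadditivity of $\omega_f$ goes the wrong way, and decreasing $\eps$ only \emph{enlarges} $n^{-\eps}$. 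The paper avoids the factor $2$ by making a single comparison: for each root $z=z_{i,m}(n)$ and each $\varphi$ in the corresponding angular sector of width $2\pi/\Delta_{in}$, both coordinates of $(b_n\log|z|,\arg z)$ differ from $(b_nR_{in},\varphi)$ by less than $n^{-\eps}$, so $f$ changes by less than $\omega_f(n^{-\eps})$ uniformly; averaging over the sector and summing then gives the stated bound directly. For the purposes of Lemma~\ref{lem:good_event_conv_distr} your bound $2\,\omega_f(n^{-\eps}/2)$ is of course just as good, so this is a cosmetic inaccuracy rather than a substantive gap, but the single-step comparison is what produces the lemma's literal inequality.
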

\begin{pf}
Assume that the event $E_n$ occurs. Take some $0\leq i<d_n$. Write
$\Delta_{in}=k_{(i+1)n}-k_{in}$. By Lemma~\ref{lemisolroots01},
the polynomial $G_n$ has a unique root, denoted by $z_{i,m}(n)$, in the
set $Z_{i,m}(n)$ for every $1\leq m\leq\Delta_{in}$. Denote by $\ZZZ
_{in}$ the finite set $\{z_{i,m}(n)\dvtx1\leq m\leq\Delta_{in}\}$.
Recall from (\ref{eqEn201}) that $\Delta_{in}>\sqrt n$. It follows
from the definition of the set $Z_{i,m}(n)$ that for every $1\leq m\leq
\Delta_{in}$,
\[
\biggl\llvert f\bigl({b_n\log}\bigl|z_{i,m}(n)\bigr|,\arg
z_{i,m}(n)\bigr)- \frac{\Delta_{in}}{2\pi}\int_{({\varphi_{in}+2\pi
m-\pi})/{\Delta
_{in}}}^{({\varphi_{in}+2\pi m+\pi})/{\Delta
_{in}}}f(b_nR_{in},
\varphi)\,d\varphi\biggr\rrvert
\]
is smaller than $\omega_f(n^{-\eps})$.
Note that for $i=0$ and $i=d_n-1$, we need to use (\ref
{eqdiffRinRinprime}) to prove this estimate.
Taking the sum over $1\leq m\leq\Delta_{in}$, we obtain
\[
\frac1n\biggl\llvert\sum_{z\in\ZZZ_{in}} f \bigl({b_n
\log}|z|, \arg z \bigr)- \Delta_{in}\bar f (b_nR_{in}
)\biggr\rrvert\leq\frac{\Delta_{in}}n \omega_f\bigl(n^{-\eps}
\bigr).
\]
Taking the sum over $0\leq i<d_n$, we obtain what is required.
\end{pf}
%
%
\begin{lemma}\label{lemPsimajGn}
We have $S_n^*\to S$ in distribution as $n\to\infty$.
\end{lemma}
\begin{pf}
By\vspace*{1pt} Proposition~\ref{propresnick} the point process $\rho_n=\sum
_{k=0}^n\delta(\frac kn, \frac{{\log}|\xi_k|}{a_n})$ converges weakly
to $\rho$. By Lemma~\ref{lemPsicont} and Proposition \ref
{propcontmapping} (which is applicable since $\P[\rho\in\MMM_0]=1$ for
$\alpha\in(0,1)$) we have that $\Psi_0(\rho_n)$
converges weakly (as a random probability measure on $\R$) to $\Psi
_0(\rho)$. It follows that $\int_{\R}\bar f \,d \Psi_0(\rho_n)$
converges in distribution to $\int_{\R}\bar f \,d \Psi_0(\rho)$,
which is exactly what is stated in the lemma.
\end{pf}
%

The proof of Theorem~\ref{theocomplex} in the case $\alpha\in(0,1)$
can be completed as follows.
By Lemma~\ref{lemgoodeventconvdistr} with $S_n(\kappa)=S_n^*$ and
$S(\kappa)=S$, we obtain $S_n\to S$ in distribution as $n\to\infty$.
This proves (\ref{eqproofmain}).

The following explains how to get rid of the assumption $\xi_0\neq0$
a.s. Let $\P[\xi_0\neq0]$ be strictly positive. Denote the first
(resp., last) nonzero coefficient of $G_n$ by $\xi_{l_n}$ (resp.,
$\xi_{n-m_n}$). For fixed $l,m\in\N_0$, consider the conditional
distribution $\P_{l,m}^{n}$ of the random variables $\xi_k$, $l\leq
k\leq n-m$, given that\vspace*{1pt} $l_n=l$, $m_n=m$. Under $\P_{l,m}^{n}$, these
variables\vspace*{1pt} are independent and, apart from the first and the last
variable, identically distributed. It is easily seen that the above
proof applies to the polynomial $\sum_{k=l}^{n-m}\xi_k z^k$ under $\P
_{l,m}^{n}$. Since this holds for all $l,m\in\N_0$, the proof is complete.

\section{\texorpdfstring{Proof of Theorem \protect\ref{theocomplexalpha0}}{Proof of Theorem 1.5}}
Recall that $\tau_n\in\{0,\ldots,n\}$ is such that
$
M_n:=\max_{k=0,\ldots,n} {\log}|\xi_k|={\log}|\xi_{\tau_n}|.
$
Intuitively, under the slow variation condition (\ref
{eqtailalpha0}), the maximum $M_n$ is, with probability close to $1$,
much larger than all the other terms ${\log}|\xi_k|$, $1\leq k\leq n$.
The majorant of the set $\{(j,{\log}|\xi_j|)\dvtx j=0,\ldots,n\}$
consists, with high probability, of two segments joining the endpoints
$(0, \log_+ |\xi_0|)$ and $(n,\log_+ |\xi_n|)$ to the maximum
$(\tau_n, {\log}|\xi_{\tau_n}|)$. The roots of $G_n$ group around two
circles corresponding to these segments. Our aim is to make this
precise. Let the index $k$ be always restricted to $0\leq k\leq n$. We
may always assume that the index $\tau_n$ is defined uniquely, since
this event has probability converging to $1$ as $n\to\infty$;
see \cite{darling}.
%
%
\begin{lemma}\label{lem0En}
For $\kappa\in(0,1/2)$, $A>0$ define a random event $E_n=\bigcap
_{i=1}^4E_n^i$, where
\begin{eqnarray*}
E_n^1&=& \biggl\{\min_{k\neq0,\tau_n} \biggl(
\frac{M_n}{\tau_n}-\frac{{\log}|\xi_k|} {k} \biggr) > n^{2A} \biggr\},
\\
E_n^2&=& \biggl\{\min_{k\neq\tau_n,n} \biggl(
\frac{M_n}{\tau_n}-\frac
{{\log}|\xi_k|} {n-k} \biggr) > n^{2A} \biggr\},
\\
E_n^3&=&\bigl\{\kappa n<\tau_n<(1-\kappa)n
\bigr\},
\\
E_n^4&=&\bigl\{\bigl|{\log}|\xi_0|\bigr|<n^{A},
M_n>n^{2A+1}, \bigl|{\log}|\xi_n|\bigr|<n^{A}
\bigr\}. 
\end{eqnarray*}
Then, for every $A>0$, $\lim_{\kappa\downarrow0}\liminf_{n\to
\infty}\P[E_n]=1$.
\end{lemma}
\begin{pf}
By symmetry, $\tau_n/n$ converges as $n\to\infty$ to the uniform
distribution, which implies that $\lim_{\kappa\downarrow0}\liminf_{n\to
\infty}\P[E_n^3]=1$.
By \cite{darling}, Theorem 3.2, the slow variation condition (\ref
{eqtailalpha0}) implies that
\[
\frac1{M_n}\mathop{\max_{0\leq k\leq n}}_{k\neq\tau_n} {\log}|
\xi_k| \toprobab0.
\]
%
It follows that
%
%
\begin{equation}
\label{eqproof01} \P\biggl[\mathop{\max_{\kappa n\leq k<n}}_{k\neq\tau
_n }
\frac
{{\log}|\xi_k|}{k}>\frac{M_n} {2n} \biggr] \leq\P\biggl[\mathop{
\max_{0\leq k\leq n}}_{k\neq\tau_n} {\log}|\xi_k|>\frac{\kappa}{2}
M_n \biggr] \ton0. 
\end{equation}
Put $c_n=\inf\{s\dvtx\bar F(s)\leq1/(\sqrt{\kappa} n)\}$. Then
$\bar
F(c_n)\sim1/(\sqrt{\kappa} n)$ by \cite{resnickbook}, pages 15 and~16,
and $\lim_{n\to\infty}c_n/n=\infty$.
Recall the Potter bound for slowly varying functions: for every $\delta
>0$, we have $\bar F(y)/\bar F(x)< 2(x/y)^{\delta}$, provided that
$x>y$ are sufficiently large; see \cite{binghambook}, Theorem 1.5.6.
We have
%
%
\begin{eqnarray}
\label{eqproof02} \P\biggl[\max_{1\leq k\leq\kappa n}\frac{{\log}|\xi
_k|}{k} >
\frac
{M_n}{2n} \biggr] &\leq& \sum_{1\leq k\leq\kappa n} \bar F
\biggl(\frac{k}{2n} c_n \biggr)+\P[M_n<c_n
]
\nonumber\\
&<& \frac{3}{\sqrt{\kappa} n}\sum_{1\leq k\leq\kappa n} \biggl(
\frac
{2n}{k} \biggr)^{1/4}+ \biggl(1-\frac{1}{2\sqrt{\kappa} n}
\biggr)^{n+1}
\\
&<& C\bigl(\kappa^{1/4}+e^{-1/(2\sqrt{\kappa})}\bigr).
\nonumber
\end{eqnarray}
Since $\bar F$ decays more slowly than any negative power of $n$,
%
%
\begin{equation}
\label{eqproof03}\qquad \P\biggl[\frac{M_n}{2n}>n^{2A} \biggr]=1-
\bigl(1-\bar F\bigl(2n^{2A+1}\bigr)\bigr)^{n+1}>1- \biggl(1-
\frac1{n^{2}} \biggr)^{n+1}\ton1. 
\end{equation}
Putting (\ref{eqproof01}), (\ref{eqproof02}) and (\ref
{eqproof03}) together and letting $\kappa\downarrow0$, we obtain\break
$\lim_{n\to\infty}\P[E_n^1]=1$. By symmetry, we also have $\lim_{n\to
\infty}\P[E_n^2]=1$. From (\ref{eqproof03}) it also follows
that $\lim_{n\to\infty}\P[E_n^4]=1$.
\end{pf}
\begin{pf*}{Proof of Theorem~\ref{theocomplexalpha0}}
In the sequel, we always suppose that the event $E_n$ occurs. The roots
of the equation $\xi_{\tau_n} z^{\tau_n}+\xi_0=0$, denoted by
$w_{1n},\ldots,w_{\tau_n n}$, satisfy
\[
|w_{kn}|=\bigl(|\xi_0|/|\xi_{\tau_n}|\bigr)^{1/\tau_n}=e^{({\log}|\xi
_0|-M_n)/\tau_n}<
e^{-n^A},\qquad 1\leq k\leq\tau_n.
\]
Similarly, the roots of the equation $\xi_{n} z^{n-\tau_n}+\xi_{\tau
_n}=0$, denoted by $w_{(\tau_n+1)n},\ldots,\break w_{nn}$, satisfy
\[
|w_{kn}|=\bigl(|\xi_{\tau_n}|/|\xi_{n}|\bigr)^{1/(n-\tau_n)}=e^{(M_n-{\log}
|\xi_n|)/(n-\tau_n)}>
e^{n^A},\qquad \tau_n<k\leq n.
\]
Choose $s,r\in\R$ so that $s={\log}|\xi_0|$ and $s-r\tau_n={\log}|\xi_{\tau
_n}|=M_n$. 
To apply Lem\-ma~\ref{lemmain} with $k=0$, $l=\tau_n$ we need to
estimate\vadjust{\goodbreak} $h:=\min_{k\neq0,\tau_n}(s-rk-{\log}|\xi_k|)$.
We have, by definition of $E_n$,
\[
\min_{k\neq0,\tau_n}
\frac{s-rk-{\log}|\xi_k|}k =\min_{k\neq0,\tau_n} \biggl(\frac{M_n}{\tau_n}-
\frac{{\log}|\xi_k|}{k}+s \biggl(\frac1k -\frac1 {\tau_n} \biggr) \biggr)
>n^{3A/2}.
\]
Hence, $h>n^{3A/2}$.
It follows that on the event $E_n$ the conditions of Lemma~\ref
{lemmain} are fulfilled for $k=0$, $l=\tau_n$ and $\delta=\zeta
=e^{-n^A}$. Then, for every $1\leq k\leq\tau_n$, the set
\[
\bigl\{z\in\C\dvtx\bigl\llvert{\log}|z|-r\bigr\rrvert\leq e^{-n^A},
\llvert{\arg z}-\arg w_{kn}\rrvert\leq e^{-n^A} \bigr\}
\]
contains exactly one root, say $z_{kn}$, of the polynomial $G_n$. It
follows that
%
\[
|z_{kn}-w_{kn}|< 10\delta e^r= 10
e^{-n^A}|w_{kn}|,\qquad 1\leq k\leq\tau_n. 
\]
By symmetry, a similar inequality holds for $\tau_n<k\leq n$.
\end{pf*}
%

\section{\texorpdfstring{Proofs of Theorems \protect\ref{theoreal} and \protect\ref{theorealalpha0}}
{Proofs of Theorems 1.10 and 1.14}}\label{secproofreal}
\subsection{Limiting point processes}\label{secproofrealdefproc}
First of all, we describe the limiting point processes $\Upsilon_{\alpha
,c}$ and $\Upsilon_{\alpha,c,p}^{\pm}$. Let $\rho$ be a
Poisson point process on $[0,1]\times(0,\infty)$ with intensity
$\alpha v^{-(\alpha+1)}\,du\,dv$ and majorant $\CCC_{\rho}$ as in
Section~\ref{subseccomplexroots}. Recall that the vertices of the
majorant $\CCC_{\rho}$ are denoted by $(X_k,Y_k)$. For $\alpha\geq
1$ the index $k$ ranges in $\Z$, whereas for $\alpha\in(0,1)$ we
have $p'\leq k\leq p''$ and $(X_{p'},Y_{p'})=(0,0)$,
$(X_{p''},Y_{p''})=(1,0)$. Let $\sigma_{k},\pi_{k}$ be independent $\{
-1,1\}$-valued random variables [attached to the \textit{vertices}
$(X_k,Y_k)$ of $\CCC_{\rho}$ except for the boundary vertices $(0,0)$
and $(1,0)$ in the case $\alpha\in(0,1)$] such that
\[
\P[\sigma_{k}=1]=c,\qquad \P[\pi_{k}=1]=1/2. 
\]
In the case $\alpha\in(0,1)$, we have to add the following boundary
conditions:
\begin{longlist}[(3)]
\item[(1)] $\pi_{p'}=1$;
\item[(2)] $\pi_{p''}=1$ in the definition of
$\Upsilon_{\alpha,c,p}^{+}$ and $\pi_{p''}=-1$ in the definition of~$\Upsilon_{\alpha,c,p}^{-}$;
\item[(3)] $\P[\sigma_{p'}=1]=\P[\sigma_{p''}=1]=p$. 
\end{longlist}
Define random variables $\eps_k^+$ and $\eps_k^-$ attached to the
\textit{linearity intervals} $[X_k,\break X_{k+1}]$ of the majorant $\CCC_{\rho
}$ by
%
%
\begin{equation}
\eps_k^+=\ind_{\{\sigma_{k}\neq\sigma_{k+1}\}},\qquad \eps_k^-=
\ind_{\{\sigma_{k}\pi_k\neq\sigma_{k+1}\pi_{k+1}\}}.
\end{equation}
With this notation, the limiting point processes $\Upsilon_{\alpha
,c}$ and $\Upsilon_{\alpha,c,p}^{\pm}$ are defined by
%
%
\begin{equation}
\label{eqPireal} \Upsilon_{\alpha,c(,p)}^{(\pm)} = \sum
_{k}\eps_k^+\delta\bigl(e^{R_k}\bigr)+
\sum_{k}\eps_k^-\delta
\bigl(-e^{R_k}\bigr),
\end{equation}
where the sum is over all linearity intervals of the majorant $\CCC
_{\rho}$, and $R_k$ is the negative of the slope of the $k$th segment
of $\CCC_{\rho}$ as in (\ref{eqCCCrho}). We proceed to the proof
of Theorem~\ref{theoreal}.

\subsection{\texorpdfstring{Proof in the case $\alpha\geq1$}{Proof in the case alpha >= 1}}
We will show that the following weak convergence of point processes on
$E=\R\times\{-1,1\}$ holds true:
%
%
\begin{equation}
\label{eqmainrealrestated} \sum_{z\in\RRR_n}
\delta\bigl({b_n\log}|z|, \sgn z\bigr)\toweak\sum_{k}
\eps_k^+ \delta(R_k,1)+\sum_{k}
\eps_k^-\delta(R_k,-1),
\end{equation}
where the sum on the right-hand side is over all linearity intervals of
the majorant $\CCC_{\rho}$.
To see that (\ref{eqmainrealrestated}) implies Theorem \ref
{theoreal} for $\alpha\geq1$ note that the mapping $F\dvtx E\to\R\bsl\{
0\}$ given by $F(r,\sigma)=\sigma e^r$ is continuous and proper
(preimages of compact sets are compact).
By \cite{resnickbook}, Proposition 3.18, it induces a vaguely
continuous mapping between the
spaces of locally finite counting measures on $E$ and $\R\bsl\{0\}$.
By Proposition~\ref{propcontmapping} we may apply this mapping to
the both sides of (\ref{eqmainrealrestated}), which implies the
statement of Theorem~\ref{theoreal} for $\alpha\geq1$.
Denote by $\RRR_n^+$ (resp., $\RRR_n^-$) the set of positive (resp.,
negative) real roots of $G_n$, counted with multiplicities.
Let $f^+,f^-\dvtx\R\to[0,\infty)$ be two continuous functions supported
on an interval $[-A,A]$. Define random variables $S_n$ and $S$ by
%
%
\begin{eqnarray}
\label{eqSnreal} S_n&=&\sum_{z\in\RRR_n^+}
f^+(b_n\log z)+\sum_{z\in\RRR_n^-}
f^-\bigl({b_n\log}|z|\bigr),
\\
\label{eqSreal} S&=&\sum_{k} \eps_k^+
f^+(R_k)+\sum_{k} \eps_k^-
f^-(R_k),
\end{eqnarray}
where the sum in (\ref{eqSreal}) is over all linearity intervals of
$\CCC_{\rho}$.
To prove (\ref{eqmainrealrestated}) it suffices to show that
$S_n\to S$ in distribution as $n\to\infty$. In fact, we may even
suppose additionally that $f^+$ and $f^-$ are Lipschitz, that is
$|f^{\pm}(z_1)-f^{\pm}(z_2)|<L|z_1-z_2|$ for some $L>0$ and all
$z_1,z_2\in\R$.
The first step is to localize the real roots of $G_n$ under some
``good'' event. We use the same notation as in Section \ref
{subsecproofmainnot}. Take $\kappa\in(0,1/2)$ and recall that the
random indices $q_n'$ and $q_n''$ have been defined in (\ref{eqqn}).
Define a random event $E_n$ as in Lemma~\ref{lemEngeq1}.
Additionally, we will need another ``good'' event $F_n$. The next lemma
states that it has probability close to $1$.
%
%
\begin{lemma}\label{lemFn}
Consider a random event
$
F_n=\{ b_n R_{q_n'n}<-2A\}\cap\{b_n\* R_{(q_n''-1)n}>2A\}.
$
Then, $\lim_{\kappa\downarrow0} \liminf_{n\to\infty} \P[F_n]=1$.
\end{lemma}
\begin{pf}
Recall from Section~\ref{secmajorantconv} that $\MMM$ is the space
of locally finite counting measures on $[0,1]\times(0,\infty]$ which
do not charge the set $[0,1]\times\{\infty\}$. Given $\mu\in\MMM$
we denote by $[x_{q'},x_{q'+1}]$ the unique linearity interval of the
majorant $\CCC_{\mu}$ such that $x_{q'}\leq\kappa< x_{q'+1}$.
Denote by $r_{q'}$ the negative of the slope of the corresponding
segment of $\CCC_{\mu}$. Define a map $T_{\kappa}\dvtx\MMM\to\R$ by
$T_{\kappa}(\mu)=r_{q'}$. Then, the same argument as in Lemma \ref
{lemPsietacont} shows that $T_{\kappa}$ continuous on $\MMM_1$;
see~(\ref{eqlemcont2}). Applying Proposition~\ref{propresnick}
together with Proposition~\ref{propcontmapping} and noting that
$T_{\kappa}(\rho_n)=b_n R_{q_n'n}$ we obtain that for every $\kappa
>0$, $b_n R_{q_n'n}\to T_{\kappa}(\rho)$ in distribution as $n\to
\infty$. By Proposition~\ref{propnumberverticesmajorant} we have
$T_{\kappa}(\rho)\to-\infty$ a.s. as $\kappa\downarrow0$. It
follows easily that $\lim_{\kappa\downarrow0} \liminf_{n\to\infty
} \P[b_n R_{q_n'n}<-2A]=1$. The statement of the lemma follows by symmetry.
\end{pf}

In the next lemma we will localize, under the event $E_n\cap F_n$,
those real roots of $G_n$ which are contained in $[-A,A]$. Recall that
the vertices of the majorant of $G_n$ are denoted (from left to right)
by $(k_{in}, {\log_+}|\xi_{k_{in}}|)$, where $0\leq i\leq d_n$ and
$k_{0n}=0$, $k_{d_nn}=n$. We already know that any linearity interval
$[k_{in},k_{(i+1)n}]$ of the majorant corresponds to a ``circle'' of
\textit{complex} roots of $G_n$ located approximately at the same
positions as the nonzero roots of the polynomial $\xi
_{k_{in}}z^{k_{in}}+\xi_{k_{(i+1)n}}z^{k_{(i+1)n}}$.
In order to localize the \textit{real} roots of $G_n$ we have to keep
track of two things: the signs of the coefficients $\xi_{k_{in}}, \xi
_{k_{{(i+1)n}}}$ and the parities of the indices $k_{in},k_{(i+1)n}$. Write
%
%
\begin{eqnarray}
\label{eqepsinplusreal} \eps_{in}^+&=&\ind\bigl\{\sgn(
\xi_{k_{in}})\neq\sgn(\xi_{k_{(i+1)n}})\bigr\},
\\
\label{eqepsinminusreal} \eps_{in}^-&=&\ind\bigl\{(-1)^{k_{in}}
\sgn(\xi_{k_{in}})\neq(-1)^{k_{(i+1)n}}\sgn(\xi_{k_{(i+1)n}})\bigr
\}.
\end{eqnarray}
The next lemma shows that $\eps_{in}^+$ (resp., $\eps_{in}^-$) is the
indicator of the presence of a real root of $G_{n}$ near $e^{R_{in}}$
(resp., $-e^{R_{in}}$).
%
%
\begin{lemma}\label{lemlocalzerogeq1real}
On the random event $E_n$ the following holds: for every $q_n'\leq
i<q_n''$ such that $\eps_{in}^+=1$ (resp., $\eps_{in}^-=1$) there is
exactly one positive (resp., negative) real root of $G_n$ satisfying
$|{\log}|z|-R_{in}|\leq\exp(-n^{{1}/{\alpha}-2\eps})$.
Moreover, if additionally $F_n$ occurs, then all real roots of $G_n$
satisfying ${b_n\log}|z|\in[-A,A]$ are among those described above.
\end{lemma}
\begin{pf}
We will use the notation of Lemma~\ref{lemlocalzerogeq1}. Recall
that on the event $E_n$ for every $q_n'\leq i< q_n''$ and every $1\leq
m\leq k_{(i+1)n}-k_{in}$ there is a unique complex root of $G_n$,
denoted by $z_{i,m}(n)$, in the set $Z_{i,m}(n)$. Let $\eps_{in}^+=1$
for some \mbox{$q_n'\leq i<q_n''$}. Then, $\varphi_{in}=0$ in Lemma \ref
{lemlocalzerogeq1}. Setting $m=k_{(i+1)n}-k_{in}$ we have that
$z:=z_{i,m}(n)$ satisfies $|{\log}|z|-R_{in}|<\delta_n$ and $|{\arg
z}|<\delta_n$. Since the coefficients of $G_n$ are real, the root $z$
must in fact be real (and positive). Indeed, otherwise, we would have a
pair complex conjugate roots (rather than a single root) in the set
$Z_{i,m}(n)$. Similarly, if $\eps_{in}^-=1$ for some $q_n'\leq
i<q_n''$, then we have a real negative root of the form $z_{i,m}(n)$
for a suitable $m$. By Lemma~\ref{lemlocalzerogeq1} all real roots
in the set $R_{q_n'n}-\delta_n\leq{\log}|z|\leq R_{(q_n''-1)n}+\delta_n$
are of the above form. To complete the proof note that this set
contains the set $-A\leq {b_n\log}|z|\leq A$ on the event $F_n$.
\end{pf}
The random variables $S_n$ and $S$ will be approximated by the random
variables $S_n(\kappa)$ and $S(\kappa)$, defined by
%
%
\begin{eqnarray}
\label{eqSnkappareal} S_n(\kappa) &=& \sum
_{q_n'<i<q_n''-1} \bigl(\eps_{in}^+ f^+(b_nR_{in})+
\eps_{in}^- f^-(b_nR_{in})\bigr),
\\
\label{eqSkappareal} S(\kappa) &=& \sum_{q'<i<q''-1} \bigl(
\eps_i^+ f^+(R_i)+\eps_i^-
f^-(R_i)\bigr).
\end{eqnarray}

%
\begin{lemma}
On the random event $E_n\cap F_n$, we have $|S_n-S_n(\kappa)|<1/n$.
\end{lemma}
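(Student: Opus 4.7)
The plan is to match each real root of $G_n$ contributing to $S_n$ with a corresponding term in $S_n(\kappa)$ via Lemma~\ref{lem:local_zero_geq1_real}, then bound the residual using the Lipschitz property of $f^{\pm}$ and the super-polynomial smallness of $\delta_n := \exp(-n^{1/\alpha - 2\eps})$ against the polynomial growth of $b_n$.

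On the event $E_n \cap F_n$, Lemma~\ref{lem:local_zero_geq1_real} identifies every real root $z$ of $G_n$ with $b_n \log |z| \in [-A, A]$: each such root is of the form $z_{i,m}(n)$ for a unique $q_n' \leq i < q_n''$, with a positive root appearing near $e^{R_{in}}$ exactly when $\eps_{in}^{+} = 1$ and a negative root near $-e^{R_{in}}$ exactly when $\eps_{in}^{-} = 1$; moreover $|\log |z| - R_{in}| \leq \delta_n$. This sets up a natural bijection between the contributions to $S_n$ and a subset of the index set $\{q_n',\ldots, q_n''-1\}\times\{+,-\}$.

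Next I would eliminate the two boundary indices $i = q_n'$ and $i = q_n'' - 1$. On $F_n$ we have $b_n R_{q_n' n} < -2A$ and $b_n R_{(q_n''-1)n} > 2A$, while any associated real root $z$ satisfies $|b_n \log|z| - b_n R_{in}| \leq b_n \delta_n$. Because $b_n = n/a_n$ grows only polynomially in $n$ whereas $\delta_n$ decays faster than any power of $n^{-1}$, we have $b_n \delta_n < A$ for $n$ sufficiently large. Hence $|b_n \log |z|| > A$, so $f^{\pm}(b_n \log |z|) = 0$ and the boundary roots contribute nothing to $S_n$. This matches the fact that $S_n(\kappa)$ sums only over the strictly interior indices $q_n' < i < q_n'' - 1$.

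For each such interior index with $\eps_{in}^{\pm} = 1$, the associated real root $z$ satisfies $|b_n \log |z| - b_n R_{in}| \leq b_n \delta_n$, and the Lipschitz bound yields $|f^{\pm}(b_n \log |z|) - f^{\pm}(b_n R_{in})| \leq L b_n \delta_n$. Summing over at most $2(d_n - 2) \leq 2n$ such contributions,
$$|S_n - S_n(\kappa)| \leq 2 n L b_n \delta_n,$$
and the right-hand side is less than $1/n$ once $n$ is large enough, since $n^2 b_n \delta_n \to 0$ super-polynomially. The only genuine subtlety lies in the boundary argument, for which the event $F_n$ was designed precisely; otherwise the estimate is straightforward given the localization already provided by Lemma~\ref{lem:local_zero_geq1_real}.
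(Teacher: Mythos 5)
Your proof is correct and follows essentially the same route as the paper's: both rely on Lemma~\ref{lem:local_zero_geq1_real} to pair each real root contributing to $S_n$ with an index of $S_n(\kappa)$, use $F_n$ to discard the boundary segments (the paper does this implicitly by noting $f^{\pm}(b_nR_{in})=0$ outside $[q_n'+1,q_n''-2]$, you do it explicitly for the roots themselves), and invoke the Lipschitz bound together with the super-polynomial smallness of $\delta_n$ against the polynomial sizes of $d_n$ and $b_n$. The extra explicitness about the boundary indices is a clarity improvement rather than a genuinely different argument.
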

\begin{pf}
Recall that $f^+$ and $f^-$ are functions supported on $[-A,A]$ with
Lipschitz constant at most $L$. By Lemma \ref
{lemlocalzerogeq1real} and the definition of $F_n$, we have, on
$E_n\cap F_n$,
\[
\Biggl\llvert\sum_{z\in\RRR_n^+} f^+(b_n\log
z)-\sum_{i=0}^{d_n-1} \eps_{in}^+
f^+(b_nR_{in})\Biggr\rrvert\leq Ld_n
b_n \exp\bigl(-n^{{1}/{\alpha
}-2\eps}\bigr) \leq\frac1{2n}.
\]
A similar inequality holds for the negative roots, and the statement follows.
\end{pf}
The next proposition determines the limiting structure of the
coefficients of $G_n$ together with attached signs and parities. Let
$\tilde{\MMM}$ be the space of locally finite counting measures on
$[0,1]\times(0,\infty]\times\{-1,1\}^2$ which do not charge the set
$[0,1]\times\{\infty\}\times\{-1,1\}^2$. We endow $\tilde\MMM$
with the topology of vague convergence. Every element $\tilde\mu\in
\tilde\MMM$ can be written in the form $\tilde\mu=\sum_{i}\delta
(u_i,v_i, \varsigma_i,\varpi_i)$, where $\mu=\sum_{i}\delta
(u_i,v_i)\in\MMM$ is the projection of $\tilde\mu$ on $\MMM$ and
$(\varsigma_i, \varpi_i)\in\{-1,1\}^2$ is considered as a mark
attached to the point $(u_i,v_i)$. In the marks $(\varsigma_i, \varpi
_i)$ we will record the signs of the coefficients of $G_n$ and the
parities of the corresponding indices.
%
%
\begin{proposition}\label{propresnickreal}
Let $\xi_0,\xi_1,\ldots$ be i.i.d. random variables
satisfying (\ref{eqtail1}) and~(\ref{eqtailreal}). Then the
following convergence holds weakly on the space $\tilde\MMM$:
%
%
\begin{equation}
\label{eqtilderhontilderho} \tilde\rho_n:=\sum
_{k=0}^{n} \delta\biggl(\frac kn,
\frac{{\log}
|\xi_k|}{a_n}, \sgn\xi_k, (-1)^{k} \biggr) \toweak
\sum_{i=1}^{\infty}\delta(U_i,V_i,
\varsigma_i,\varpi_i) =: \tilde\rho.\hspace*{-20pt}
\end{equation}
Here,\vspace*{1pt} $\rho=\sum_{i=1}^{\infty} \delta(U_i,V_i)$ is a Poisson point
process on $[0,1]\times(0,\infty)$ with intensity $\alpha v^{-(\alpha
+1)}\,du\,dv$ and independently, $\varsigma_i, \varpi_i$ are $\{-1,1\}
$-valued random variables with $\P[\varsigma_{i}=1]=c$ and $\P
[\varpi_{i}=1]=1/2$. Terms with ${\log}|\xi_k|\leq0$ are ignored.
\end{proposition}
\begin{pf}
Write $\xi_k^+=\xi_k \ind_{\xi_k>0}$ and $\xi_k^-=|\xi_k| \ind_{\xi
_k\leq0}$.
Note that by (\ref{eqtail1}), (\ref{eqdefan}) and (\ref{eqtailreal}),
\[
\P\biggl[\frac{\log\xi_k^+}{a_n}> t \biggr] \sim\frac c
{nt^{\alpha}},\qquad
\P
\biggl[\frac{\log\xi_k^-}{a_n}> t \biggr] \sim\frac{1-c} {
nt^{\alpha}}, \qquad n\to\infty.
\]
Fix some $(\varsigma,\varpi)\in\{-1,1\}^2$. We will consider only
coefficients $\xi_k$ with sign $\varsigma$ and parity $\varpi$. By
Proposition~\ref{propresnick} the point process
\[
\tilde\rho_n(\varsigma, \varpi):=\sum_{k=0}^{n}
\delta\biggl(\frac{k}n, \frac{{\log}|\xi_{k}|}{a_{n}} \biggr)\ind\bigl\{
\sgn(
\xi_k)=\varsigma, (-1)^k=\varpi\bigr\}
\]
converges weakly to the Poisson point process with intensity $(\alpha
/2) c v^{-(\alpha+1)}\,du\,dv$ if $\varsigma=1$ and $(\alpha/2) (1-c)
v^{-(\alpha+1)}\,du\,dv$ if $\varsigma=-1$. Taking the union over all $4$
choices of $(\varsigma,\varpi)$, we obtain the statement.
\end{pf}

In order to pass from the convergence of the coefficients to the
convergence of the point process of real roots we need a continuity
argument. Consider $\tilde\mu\in\tilde\MMM$ with a projection $\mu
\in\MMM$. We denote the vertices of the majorant of $\mu$ counted
from left to right by $(x_k, y_k)$. Denote by $r_k$ the negative of the
slope of the majorant of $\mu$ on the interval $[x_k,x_{k+1}]$. Let
$\kappa\in(0,1/2)$ be fixed and define indices $q'$ and $q''$ by the
conditions $x_{q'}\leq\kappa<x_{q'+1}$ and $x_{q''-1}<1-\kappa\leq
x_{q''}$. For $q'< k< q''$ we denote by $(\sigma_k, \pi_k)\in\{-1,1\}
^2$ the mark attached to the vertex $(x_k,y_k)$. Let $\tilde\MMM_1$
be the set of all $\tilde\mu\in\tilde\MMM$ such that $\mu\in\MMM_1$,
where $\MMM_1\subset\MMM$ is defined as in Section~\ref
{secmajorantconv}. Let $\PPP$ be the space of locally finite
counting measures on $\R$ endowed with the topology of vague
convergence. Define a map $\Phi_{1}\dvtx\tilde\MMM\to\PPP\times
\PPP
$ by
\[
\Phi_{1}(\tilde\mu) = \biggl( \sum_{q'<k<q''-1}
\ind_{\sigma_k\neq\sigma_{k+1}}\delta(r_k), \sum_{q'<k<q''-1}
\ind_{\sigma_k\pi_k\neq\sigma_{k+1}\pi
_{k+1}}\delta(r_k) \biggr).
\]

%
%
\begin{lemma}\label{lemPsicontrealgeq1}
The map $\Phi_{1}$ is continuous on $\tilde\MMM_1$.
\end{lemma}
\begin{pf}
Let $\{\tilde\mu_n\}_{n\in\N}\subset\tilde\MMM$ be a sequence
converging vaguely to $\tilde\mu\in\tilde\MMM_1$. This implies
the vague convergence of the corresponding projections: $\mu_n\to\mu
\in\MMM_1$. Arguing as in the proof of Lemma~\ref{lemPsietacont}
(and using the same notation) we arrive at the following conclusions.
There exist points $(x_{kn},y_{kn})$, $q'\leq k\leq q''$, which are
vertices of the majorant of $\mu_n$, such that $(x_{kn},y_{kn})\to
(x_k,y_k)$ as $n\to\infty$. Further, $x_{q'n}< \kappa<x_{(q'+1)n}$
and $x_{(q''-1)n}<1-\kappa<x_{q''n}$ for sufficiently large $n$. Also,
with the same notation as in (\ref{eqlemcont2}), $r_{kn}\to r_k$ as
$n\to\infty$.
Finally, $\tilde\mu_n\to\tilde\mu$ implies that for sufficiently
large $n$ the mark $(\sigma_{kn}, \pi_{kn})$ attached to
$(x_{kn},y_{kn})$ is the same as the mark $(\sigma_k,\pi_k)$ attached
to $(x_k,y_k)$, for all $q'\leq k\leq q''$.
This implies that $\Phi_{1}(\tilde\mu_n)\to\Phi_{1}(\tilde\mu)$
as $n\to\infty$, whence the continuity.
\end{pf}
%
%
%
%
\begin{lemma}
We have $S_n(\kappa)\to S(\kappa)$ in distribution as $n\to\infty$.
\end{lemma}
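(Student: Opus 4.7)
The proof should be a straightforward application of the continuous mapping theorem to the marked point process convergence established in Proposition~\ref{prop:resnick_real}, via the continuity established in Lemma~\ref{lem:Psi_cont_real_geq1}. The plan is as follows.

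First, I would identify $\Phi_1(\tilde\rho_n)$ and $\Phi_1(\tilde\rho)$ explicitly. The projection $\rho_n$ of $\tilde\rho_n$ onto $\MMM$ has vertices $(k_{in}/n, \log_+|\xi_{k_{in}}|/a_n)$, and by the scaling $b_n = n/a_n$, the negative of the slope of the $i$-th segment of $\CCC_{\rho_n}$ equals $b_n R_{in}$. The marks attached to the vertex at $k_{in}/n$ are $(\sgn \xi_{k_{in}}, (-1)^{k_{in}})$, so the indicators appearing in the definition of $\Phi_1$ are exactly $\eps_{in}^+$ and $\eps_{in}^-$ as defined in~\eqref{eq:eps_in_plus_real}--\eqref{eq:eps_in_minus_real}. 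Hence
$$
\Phi_1(\tilde\rho_n)
=
\Bigl(
\sum_{q_n'<i<q_n''-1} \eps_{in}^+\,\delta(b_n R_{in}),\;
\sum_{q_n'<i<q_n''-1} \eps_{in}^-\,\delta(b_n R_{in})
\Bigr),
$$
and similarly $\Phi_1(\tilde\rho)$ is the pair $\bigl(\sum_{q'<k<q''-1}\eps_k^+\delta(R_k),\;\sum_{q'<k<q''-1}\eps_k^-\delta(R_k)\bigr)$ in view of the definition of $\eps_k^{\pm}$ in Section~\ref{sec:proof_real_def_proc}.

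Next, I would invoke Proposition~\ref{prop:resnick_real} to obtain the weak convergence $\tilde\rho_n\toweak\tilde\rho$ on $\tilde\MMM$. Since the projection of $\tilde\rho$ onto $\MMM$ is the standard Poisson process $\rho$, and in the proof of Lemma~\ref{lem:En_geq1} we already used that $\P[\rho\in\MMM_1]=1$ for $\alpha\geq 1$, we have $\P[\tilde\rho\in\tilde\MMM_1]=1$. By Lemma~\ref{lem:Psi_cont_real_geq1} the map $\Phi_1:\tilde\MMM\to\PPP\times\PPP$ is continuous at every point of $\tilde\MMM_1$, so Proposition~\ref{prop:cont_mapping} applies and yields
$$
\Phi_1(\tilde\rho_n)\toweak\Phi_1(\tilde\rho)
$$
in the product vague topology on $\PPP\times\PPP$.

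Finally, because $f^+$ and $f^-$ are continuous and compactly supported on $[-A,A]$, the functional
$(\nu_1,\nu_2)\mapsto \int f^+\,d\nu_1+\int f^-\,d\nu_2$
is continuous on $\PPP\times\PPP$ (vague convergence of counting measures integrates continuous compactly supported test functions). Composing with $\Phi_1$ and applying Proposition~\ref{prop:cont_mapping} once more, the random variables $\int f^+ d\Phi_1(\tilde\rho_n)_1 + \int f^- d\Phi_1(\tilde\rho_n)_2 = S_n(\kappa)$ converge in distribution to $\int f^+ d\Phi_1(\tilde\rho)_1 + \int f^- d\Phi_1(\tilde\rho)_2 = S(\kappa)$, which is the claim. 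There is no real obstacle; the only subtlety is the bookkeeping that matches the indicators $\eps_{in}^\pm$ and slopes $b_n R_{in}$ appearing in $S_n(\kappa)$ with the output of $\Phi_1(\tilde\rho_n)$, together with checking that the restriction $q'<k<q''-1$ in the definition of $\Phi_1$ aligns with the summation range in $S(\kappa)$ and $S_n(\kappa)$.
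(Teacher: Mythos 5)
Your proposal is correct and follows essentially the same route as the paper's own proof: apply Proposition~\ref{prop:resnick_real} for $\tilde\rho_n\toweak\tilde\rho$ on $\tilde\MMM$, then compose $\Phi_1$ (continuous on $\tilde\MMM_1$ by Lemma~\ref{lem:Psi_cont_real_geq1}, with $\P[\tilde\rho\in\tilde\MMM_1]=1$ since $\alpha\geq1$) with the integration functional $(\nu^+,\nu^-)\mapsto\int f^+\,d\nu^+ + \int f^-\,d\nu^-$, and invoke the continuous mapping theorem. Your explicit bookkeeping identifying $\Phi_1(\tilde\rho_n)$ with $\bigl(\sum\eps_{in}^+\delta(b_nR_{in}),\sum\eps_{in}^-\delta(b_nR_{in})\bigr)$ is implicit but not spelled out in the paper; otherwise the two arguments coincide.
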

\begin{pf}
By Proposition~\ref{propresnickreal} we have $\tilde\rho_n\to
\tilde\rho$ weakly on $\tilde\MMM$. Define a map $I\dvtx\PPP\times
\PPP\to\R$ by $I(\nu^+,\nu^-)=\int_{\R} f^+ \,d\nu^++ \int_{\R
}f^- \,d\nu^-$. Clearly, $I$ is continuous on $\MMM_1$. By Lemma \ref
{lemPsicontrealgeq1} the map $I\circ\Phi_{1}\dvtx\tilde\MMM\to\R$
is continuous.
By Proposition~\ref{propcontmapping} (which is applicable since $\P
[\tilde\rho\in\tilde\MMM_1]=1$ for $\alpha\geq1$) we have that
$I(\Phi_{1}(\tilde\rho_n))\to I(\Phi_{1}(\tilde\rho))$ in
distribution. This is exactly what is stated in the lemma.
\end{pf}

The proof of Theorem~\ref{theoreal} in the case $\alpha\geq1$ can
be completed as follows. Trivially, we have $S(\kappa)\to S$ a.s. as
$\kappa\downarrow0$. All the other assumptions of Lemma~\ref
{lemgoodeventconvdistr} have been verified above. Applying
Lemma~\ref{lemgoodeventconvdistr} we obtain $S_n\to S$ in
distribution as $n\to\infty$.

\subsection{\texorpdfstring{Proof in the case $\alpha\in(0,1)$}{Proof in the case alpha in (0,1)}}
We will show that the weak convergence of point processes in (\ref
{eqmainrealrestated}) holds, this time on the space $E=[-\infty
,+\infty]\times\{-1,1\}$ with the restriction that $n$ stays either
even or odd and $\eps_k^+, \eps_k^-$ on the right-hand side of (\ref
{eqmainrealrestated}) is defined accordingly to this choice (see the
boundary conditions in Section~\ref{secproofrealdefproc}). Let
$f^+,f^-\dvtx[-\infty,\infty] \to[0,\infty)$ be two continuous
functions such that $|f^{\pm}(z_1)-f^{\pm}(z_2)|<L|z_1-z_2|$ for all
$z_1,z_2\in\R$. With the same notation as in (\ref{eqSnreal})
and (\ref{eqSreal}) it suffices to prove that $S_n\to S$ in
distribution as $n\to\infty$.
The next lemma localizes all real roots of $G_n$ under a ``good'' event.
%
%
\begin{lemma}\label{lemlocalzero01real}
On the random event $E_n$ defined as in Lemma~\ref{lemEn01} the
following holds:
For every $0\leq i<d_n$ such that $\eps_{in}^+=1$ (resp., $\eps
_{in}^-=1$) there is exactly one positive (resp., negative) real root
$z$ of $G_n$ satisfying $|{\log}|z|-R_{in}'|\leq\exp(-n^{1/
{\alpha}-1-3\eps})$.
Moreover, there are no other real roots of $G_n$.
\end{lemma}
\begin{pf}
Follows from Lemma~\ref{lemisolroots01}; see the proof of
Lemma~\ref{lemlocalzerogeq1real}.
\end{pf}
Take $\kappa\in(0,1/2)$, and define random variables $S_n(\kappa)$
and $S(\kappa)$ as in (\ref{eqSnkappareal}) and (\ref
{eqSkappareal}), but with summation over $q_n'\leq k<q_n''$ and
$q'\leq k<q''$.
%
%
\begin{lemma}
On the random event $E_n$, we have $|S_n-S_n(\kappa)|<1/\sqrt{n}$.
\end{lemma}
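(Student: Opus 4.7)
The strategy mirrors the analogous lemma from the $\alpha\geq 1$ case, with the extra wrinkle that the first and last slopes of the majorant of $G_n$ require the correction $R_{in}'$ instead of $R_{in}$. My first step is to use Remark~\ref{rem:E_n_01} to note that on $E_n$ one has $q_n'=0$ and $q_n''=d_n$, so the sum defining $S_n(\kappa)$ actually runs over all linearity intervals $0\leq i<d_n$ of the majorant of $G_n$. Next I will invoke Lemma~\ref{lem:local_zero_01_real}: on $E_n$, the real roots of $G_n$ are in bijection with those indices $i$ for which $\eps_{in}^{\pm}=1$, and each such root $z$ satisfies $\bigl|\log|z|-R_{in}'\bigr|\leq \delta_n$ with $\delta_n=\exp(-n^{1/\alpha-1-3\eps})$.

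Using the Lipschitz hypothesis $|f^{\pm}(z_1)-f^{\pm}(z_2)|\leq L|z_1-z_2|$, I bound each summand of $|S_n-S_n(\kappa)|$ by $Lb_n\bigl|\log|z|-R_{in}\bigr|$. Splitting
$$
\bigl|\log|z|-R_{in}\bigr|\leq \bigl|\log|z|-R_{in}'\bigr|+|R_{in}'-R_{in}|\leq \delta_n+|R_{in}'-R_{in}|,
$$
and using $R_{in}'=R_{in}$ for $0<i<d_n-1$, while $|R_{in}'-R_{in}|<n^{2\eps-1}$ for $i\in\{0,d_n-1\}$ by~\eqref{eq:diff_R_in_R_in_prime}, summation over $0\leq i<d_n$ yields
$$
|S_n-S_n(\kappa)|\leq 2Ld_nb_n\delta_n+4Lb_n n^{2\eps-1}.
$$

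The final step is verifying that both terms are $o(n^{-1/2})$. The first is at most $2Lnb_n\delta_n$, and since $\delta_n$ decays faster than any polynomial (we have $1/\alpha-1>0$ because $\alpha\in(0,1)$), this is negligible. For the second, regular variation of $\bar F$ gives $b_n\leq n^{1-1/\alpha+\eps/2}$ for $n$ large, so $4Lb_n n^{2\eps-1}\leq 4Ln^{3\eps-1/\alpha}$; choosing $\eps$ small enough makes this $o(n^{-1/2})$ since $1/\alpha>1$. I do not anticipate a genuine obstacle here: the whole point of introducing the auxiliary slopes $R_{in}'$ in~\eqref{eq:diff_R_in_R_in_prime} was to enable exactly this endpoint estimate, and the bulk indices $0<i<d_n-1$ are handled with essentially the same calculation as in the $\alpha\geq 1$ version of the lemma.
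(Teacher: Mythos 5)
Your proof is correct and follows exactly the route the paper leaves implicit: invoke Remark~\ref{rem:E_n_01} to identify $q_n'=0$, $q_n''=d_n$, invoke Lemma~\ref{lem:local_zero_01_real} for the localization of each real root within $\delta_n$ of $R_{in}'$, apply the Lipschitz bound, and absorb the endpoint correction $|R_{in}'-R_{in}|<n^{2\eps-1}$ from~\eqref{eq:diff_R_in_R_in_prime}; the paper's one-line proof cites precisely these three ingredients, and you have merely written out the arithmetic that makes the resulting bound $o(n^{-1/2})$.
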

\begin{pf}
By Remark~\ref{remEn01} we have $q_n'=0$ and $q_n''=d_n$ on $E_n$.
The rest follows from Lemma~\ref{lemlocalzero01real}, the
Lipschitz property of $f^+$ and $f^-$ and (\ref{eqdiffRinRinprime}).
\end{pf}

Again, we need a continuity argument to transform the convergence of
the coefficients in Proposition~\ref{propresnickreal} into the
convergence of real roots. This time, we have to take care of the first
and the last coefficients of the random polynomial $G_n$. Write $\KKK
=\tilde{\MMM}\times\{-1,1\}^2$. Every element of $\KKK$ can be
written in the form $(\tilde\mu, \sigma',\sigma'')$, where $\tilde
\mu\in\tilde\MMM$ and $(\sigma',\sigma'')\in\{-1,1\}^2$. In
$\sigma'$ and $\sigma''$ we will record the signs of the first and
the last coefficients of $G_n$. As above, the vertices of the majorant
of $\mu$ counted from left to right are denoted by $(x_k, y_k)$ and
the indices $q'$ and $q''$ are defined by the conditions $x_{q'}\leq
\kappa<x_{q'+1}$ and $x_{q''-1}<1-\kappa\leq x_{q''}$. For $q'< k<
q''$ (note the strict inequalities) we denote by $(\sigma_k, \pi_k)\in\{
-1,1\}^2$ the mark attached to the vertex $(x_k,y_k)$. We will
need the following boundary conditions: Define $(\sigma_{q'},\pi
_{q'})=(\sigma',1)$ and put $(\sigma_{q''},\pi_{q''})=(\sigma'',1)$
(if we are proving the convergence of $\Upsilon_{2n}$) or $(\sigma
_{q''},\pi_{q''})=(\sigma'',-1)$ (if we are proving the convergence
of $\Upsilon_{2n+1}$). Let $\KKK_0$ be the set of all $(\tilde\mu,
\sigma',\sigma'')\in\KKK$ such that the projection $\mu$ of
$\tilde\mu$ satisfies $\mu\in\MMM_0$. Here, $\MMM_0\subset\MMM$
is defined as in Section~\ref{secmajorantconv}. Let $\QQQ$ be the
space of finite counting measures on $[-\infty,\infty]$ endowed with
the topology of weak convergence. Define a map $\Phi_{0}\dvtx\KKK\to
\QQQ\times\QQQ$ by
\[
\Phi_{0}\bigl(\tilde\mu, \sigma',\sigma''
\bigr) = \Biggl( \sum_{k=q'}^{q''-1}
\ind_{\sigma_k\neq\sigma_{k+1}}\delta(r_k), \sum_{k=q'}^{q''-1}
\ind_{\sigma_k\pi_k\neq\sigma_{k+1}\pi
_{k+1}}\delta(r_k) \Biggr).
\]

%
%
\begin{lemma}\label{lemPsicontreal01}
The map $\Phi_{0}$ is continuous on $\KKK_0$.
\end{lemma}
\begin{pf}
Let $\{(\tilde\mu_n, \sigma_n',\sigma_n'')\}_{n\in\N}\subset\KKK
$ be a sequence converging vaguely to $(\tilde\mu, \sigma',\sigma'')\in
\KKK_0$. This implies that for sufficiently large $n$, $\sigma_n'=\sigma
'$ and $\sigma_n''=\sigma''$. Also, $\tilde\mu_n\to
\tilde\mu$ vaguely. Consequently, we have the vague convergence of
the corresponding projections: $\mu_n\to\mu$. As in the proof of
Lemma~\ref{lemPsicont} we obtain the following results. There exist
points $(x_{kn},y_{kn})$, $q'<k<q''$, which are vertices of the
majorant of $\mu_n$, such that $(x_{kn},y_{kn})\to(x_k,y_k)$ as $n\to
\infty$. Also, $x_{q'n}< \kappa<x_{(q'+1)n}$ and
$x_{(q''-1)n}<1-\kappa<x_{q''n}$ for sufficiently large $n$.
Furthermore, with the same notation as in (\ref{eqlemcont2}),
$r_{kn}\to r_k$ as $n\to\infty$. It follows from $\tilde\mu_n\to
\tilde\mu$ that for sufficiently large $n$ the mark $(\sigma_{kn},
\pi_{kn})$ attached to $(x_{kn},y_{kn})$ is the same as the mark
$(\sigma_k,\pi_k)$ attached to $(x_k,y_k)$ for all $q'< k< q''$. The
same statement holds for $k=q'$ and $k=q''$ by the boundary conditions.
This implies that $\Phi_{0}(\tilde\mu_n, \sigma_n', \sigma_n'')\to
\Phi_{0}(\tilde\mu, \sigma', \sigma'')$ as $n\to\infty$.
\end{pf}
%
%
\begin{lemma}
We have $S_n(\kappa)\to S(\kappa)$ in distribution as $n\to\infty$.
\end{lemma}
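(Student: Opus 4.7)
The plan is to mirror the $\alpha\ge 1$ case, using Lemma~\ref{lem:Psi_cont_real_01} in place of Lemma~\ref{lem:Psi_cont_real_geq1}, but I first need to upgrade Proposition~\ref{prop:resnick_real} to a joint convergence that carries the signs of the boundary coefficients $\xi_0$ and $\xi_n$ as extra data, since $\Phi_0$ takes these as an input separate from the point process.

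First I would establish the joint weak convergence on $\KKK=\tilde{\MMM}\times\{-1,1\}^2$:
\[
(\tilde\rho_n,\sgn\xi_0,\sgn\xi_n)\toweak(\tilde\rho,\sigma',\sigma''),
\]
where $\sigma',\sigma''$ are $\{-1,1\}$-valued random variables with $\P[\sigma'=1]=\P[\sigma''=1]=p$, mutually independent and independent of $\tilde\rho$. The key observation is that the atoms contributed to $\tilde\rho_n$ by $\xi_0$ and $\xi_n$ have second coordinates $\log_+|\xi_0|/a_n$ and $\log_+|\xi_n|/a_n$, which are $o_P(1)$ since $|\log|\xi_j||$ is tight for fixed $j$ while $a_n\to\infty$. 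In the vague topology on $[0,1]\times(0,\infty]$ such atoms leave every compact window, so they contribute nothing to the limit, and this decouples $(\sgn\xi_0,\sgn\xi_n)$ from the rest of $\tilde\rho_n$ asymptotically.

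Next I would apply the continuous mapping theorem. By Proposition~\ref{prop:number_vertices_majorant} together with the standard facts that no atom of $\rho$ has first coordinate exactly $\kappa$ or $1-\kappa$ and that $\bar\rho(L)\le 2$ for every line $L$ a.s., we have $\P[(\tilde\rho,\sigma',\sigma'')\in\KKK_0]=1$ when $\alpha\in(0,1)$. By Lemma~\ref{lem:Psi_cont_real_01}, $\Phi_0$ is continuous on $\KKK_0$. The linear functional $I:\QQQ\times\QQQ\to\R$ given by $I(\nu^+,\nu^-)=\int f^+d\nu^++\int f^-d\nu^-$ is continuous on $\QQQ\times\QQQ$ because $f^\pm$ are continuous on the compact space $[-\infty,\infty]$ and hence bounded. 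Therefore $I\circ\Phi_0$ is a.s.\ continuous at $(\tilde\rho,\sigma',\sigma'')$, and Proposition~\ref{prop:cont_mapping} yields
\[
I(\Phi_0(\tilde\rho_n,\sgn\xi_0,\sgn\xi_n))\todistr I(\Phi_0(\tilde\rho,\sigma',\sigma''))=S(\kappa),
\]
where the last equality uses the construction in Section~\ref{sec:proof_real_def_proc}; the parity convention $\pi_{q''}=\pm1$ built into $\Phi_0$ is chosen according to whether we are proving weak convergence of $\Upsilon_{2n}$ or of $\Upsilon_{2n+1}$.

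Finally I would identify the left-hand side with $S_n(\kappa)$. On the event $E_n$ of Lemma~\ref{lem:E_n_01} we have $q_n'=0$ and $q_n''=d_n$ by Remark~\ref{rem:E_n_01}, so the boundary marks $(\sgn\xi_0,1)$ and $(\sgn\xi_n,(-1)^n)$ imposed inside $\Phi_0$ coincide with the actual marks at the first and last vertices of the majorant of $G_n$; consequently $I(\Phi_0(\tilde\rho_n,\sgn\xi_0,\sgn\xi_n))=S_n(\kappa)$ on $E_n$. Since $f^\pm$ are bounded and $\P[E_n]$ can be made arbitrarily close to $1$ for small $\kappa$ and large $n$, Slutsky's theorem delivers $S_n(\kappa)\to S(\kappa)$ in distribution. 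The main subtlety is the first step: joint convergence with the boundary signs is not automatic from Proposition~\ref{prop:resnick_real} because $\sgn\xi_0$ and $\sgn\xi_n$ are measurable functions of $\xi_0,\xi_n$, which themselves contribute to $\tilde\rho_n$; the resolution relies precisely on the vanishing of the two boundary atoms in the vague topology.
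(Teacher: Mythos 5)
Your first two steps faithfully reproduce the paper's argument: establish the joint weak convergence $(\tilde\rho_n,\sgn\xi_0,\sgn\xi_n)\toweak(\tilde\rho,\sigma',\sigma'')$ on $\KKK$, then apply the continuous-mapping theorem through $\Phi_0$ (Lemma~\ref{lem:Psi_cont_real_01}) and the bounded linear functional $I$. Your explanation of why the joint convergence holds --- the atoms of $\tilde\rho_n$ contributed by $\xi_0$ and $\xi_n$ have second coordinate $O_P(1/a_n)\to0$ and so leave every vaguely compact window of $[0,1]\times(0,\infty]$, decoupling the two boundary signs --- is a more detailed rendering of the paper's terse remark that the sum in~\eqref{eq:tilde_rhon_tilde_rho} may be taken over $1\le k\le n-1$.

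Your third step, however, contains a genuine gap. You correctly notice that $I(\Phi_0(\tilde\rho_n,\sgn\xi_0,\sgn\xi_n))$ agrees with the displayed formula for $S_n(\kappa)$ only on the event $E_n$, and then try to close the discrepancy via Slutsky after noting that ``$\P[E_n]$ can be made arbitrarily close to $1$ for small $\kappa$ and large $n$''. But the lemma must hold for each \emph{fixed} $\kappa\in(0,1/2)$, and, unlike the case $\alpha\geq1$ (Lemma~\ref{lem:En_geq1}), Lemma~\ref{lem:E_n_01} only gives $\lim_{\kappa\downarrow0}\liminf_{n\to\infty}\P[E_n]=1$: for fixed $\kappa$ the subevent $E_n^5=\{\tau_n>\kappa n,\ \theta_n<(1-\kappa)n\}$ has a limiting probability strictly below $1$ (roughly $1-c\kappa^{1-\alpha}$, since $\tau_n/n$ converges to a $[0,1]$-valued variable with density proportional to $u^{-\alpha}$). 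So $\P[E_n^c]$ is bounded away from $0$ for fixed $\kappa$, and Slutsky does not deliver $S_n(\kappa)\to S(\kappa)$.

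The paper sidesteps your third step entirely: $S_n(\kappa)$ is to be identified with $I(\Phi_0(\tilde\rho_n,\sgn\xi_0,\sgn\xi_n))$ itself, with the boundary marks $(\sgn\xi_0,1)$ and $(\sgn\xi_n,\pm1)$ baked into the map $\Phi_0$ by construction rather than read off the atoms of $\tilde\rho_n$. With this reading the preceding helper lemma is unchanged --- on $E_n$ the imposed boundary marks coincide with the actual ones, so the estimate $|S_n-S_n(\kappa)|<1/\sqrt n$ remains valid --- and your step~2 already completes the proof; step~3 is then unnecessary, and as written it is not sound for a fixed $\kappa$.
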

\begin{pf}
By Proposition~\ref{propresnickreal} we have $\tilde\rho_n\to
\tilde\rho$ weakly on $\tilde\MMM$. The sum in (\ref
{eqtilderhontilderho}) can be taken from $1$ to $n-1$. Consequently,
$(\tilde\rho_n, \sgn\xi_0,\sgn\xi_n)$ converges weakly, as a
random element in $\KKK$, to $(\tilde\rho, \sigma',\sigma'')$,
where $\sigma'$ and $\sigma''$ are independent (and independent of
$\tilde\rho$) $\{-1,1\}$-valued random variables with the same
distribution as $\sgn\xi_0$. By Lemma~\ref{lemPsicontreal01} and
Proposition~\ref{propcontmapping} (which is applicable since $\P
[(\tilde\rho, \sigma',\sigma'')\in\KKK_0]=1$ for $\alpha\in
(0,1)$) we have that $\Phi_{0}(\tilde\rho_n, \sgn\xi_0,\sgn\xi_n)$
converges, as a random element in $\QQQ\times\QQQ$, to $\Phi_{0}(\tilde
\rho, \sigma',\sigma'')$ as $n\to\infty$. Taking the
integrals of $f^+$ and $f^-$ over the components of $\Phi_{0}(\tilde
\rho_n, \sgn\xi_0,\sgn\xi_n)$ and $\Phi_{0}(\tilde\rho, \sigma',\sigma
'')$, we arrive at the statement of the lemma.
\end{pf}

The proof of Theorem~\ref{theoreal} in the case $\alpha\in(0,1)$
can be completed as follows. Trivially, we have $S(\kappa)\to S$ a.s.
as $\kappa\downarrow0$. All the other assumptions of Lemma \ref
{lemgoodeventconvdistr} have been verified above. Applying
Lemma~\ref{lemgoodeventconvdistr}, we obtain $S_n\to S$ in
distribution as $n\to\infty$. The proof is complete.

\subsection{\texorpdfstring{Proof of Theorem \protect\ref{theorealalpha0}}{Proof of Theorem 1.14}}
It follows from the proof of Theorem~\ref{theocomplexalpha0} that on
the event $E_n$ defined as in Lemma~\ref{lem0En}, the number of real
roots of $G_n$ is the same as the number of real solution of the equation
%
%
\begin{equation}
\label{eqequivpolyforalpha0} \bigl(\xi_{\tau_n} z^{\tau_n}+
\xi_0 \bigr) \bigl(\xi_n z^{n-\tau_n}+
\xi_{\tau_n} \bigr)=0.
\end{equation}
The number of real solutions of (\ref{eqequivpolyforalpha0})
depends on whether the numbers $0,\tau_n,n$ are even or odd and on
whether the coefficients $\xi_0, \xi_{\tau_n},\xi_n$ are positive
or negative. It is not difficult to show that $(-1)^{\tau_n}$ and
$\sgn\xi_{\tau_n}$ become asymptotically independent and that $\P
[(-1)^{\tau_n}=1]\to1/2$ and $\P[\sgn\xi_{\tau_n}=1]\to c$ as
$n\to\infty$. Considering all possible cases leads to (\ref
{eqtheorealalpha0eq1}) and (\ref{eqtheorealalpha0eq2}).

\section{\texorpdfstring{Proofs of Theorems \protect\ref{theointensity} and \protect\ref{theoprobabtwoseg}}
{Proofs of Theorems 1.8 and 1.9}}
\subsection{\texorpdfstring{Proof of Theorem \protect\ref{theointensity}}{Proof of Theorem 1.8}}
Let $\rho$ be a Poisson point process with intensity $\nu
(du\,dv)=\alpha v^{-(\alpha+1)}\,du\,dv$ on $E=[0,1]\times(0,\infty)$,
where $\alpha\in(0,1)$. We are going to compute the expectation of
$L_{\alpha}$, the number of segments of the least concave majorant of
$\rho$. Denote by $\rho^2_{\ne}$ the set of all ordered pairs of
distinct atoms of the point process $\rho$. For $P_1,P_2\in E$
consider an indicator function $f_{\rho}(P_1,P_2)$ taking value $1$ if
and only if there are no points of the Poisson process $\rho$ lying
above the line passing through $P_1$ and $P_2$. Counting the first and
the last segments of the majorant of $\rho$ separately, we have $\E
L_{\alpha}=2+I_{\alpha}/2$, where
\[
I_{\alpha}=\E\biggl[\sum_{(P_1,P_2)\in\rho^2_{\ne}}f_{\rho
}(P_1,P_2)
\biggr].
\]
In the sequel we compute $I_{\alpha}$. Applying the Slyvnyack--Mecke
formula (see, e.g., \cite{schneiderweilbook}, Corollary 3.2.3), we obtain
\[
I_{\alpha}=\int_{E^2}\E
\bigl[f_{\rho}(P_1,P_2)\bigr]
\nu(dP_1)\nu(dP_2).
\]
%
Denoting $P_1=(x_1,y_1),P_2=(x_2,y_2)$, we have
\[
I_{\alpha}=\alpha^2\int_0^\infty
\int_0^\infty\int_0^1
\int_0^1\E\bigl[f_{\rho}(P_1,P_2)
\bigr]y_1^{-\alpha-1}y_2^{-\alpha-1}
\,dx_1\,dx_2\,dy_1\,dy_2.
\]
The probability of the event that there are no points of $\rho$ lying
above the line $P_1P_2$ is nonzero only if the line $P_1P_2$
intersects both vertical sides of the boundary of $E$. Therefore,
\[
I_{\alpha}=2\alpha^2\int_X\int
_{Y} \E\bigl[f_{\rho
}(P_1,P_2)
\bigr]y_1^{-\alpha-1}y_2^{-\alpha-1}
\,dy_1\,dy_2 \,dx_1\,dx_2,
\]
where $X=\{(x_1,x_2)\dvtx0<x_1<x_2<1\}$, and $Y=Y_{x_1,x_2}$ is a set
defined by
\[
Y= \bigl\{(y_1,y_2)\in(0,\infty)^2 \dvtx
y_1x_2-y_2x_1>0,
y_2-y_1+y_1x_2-y_2x_1>0
\bigr\}.
\]
Let us replace the variables $y_1,y_2$ by
\[
r=-\frac{y_2-y_1}{x_2-x_1},\qquad u=1+\frac{y_2-y_1}{y_1x_2-y_2x_1}.
\]
Then, $(y_1,y_2)\in Y$ if and only if $(r,u)\in(-\infty,0)\times
(1,\infty)$ or $(r,u)\in(0,\infty)\times(0,1)$. The inverse
transformation is given by
\[
y_1=r \biggl(\frac{1}{1-u}-x_1 \biggr),\qquad
y_2=r \biggl(\frac
{1}{1-u}-x_2 \biggr).
\]
%
The Jacobian determinant of the transformation $(r,u)\mapsto(y_1,y_2)$
is equal to
$
r(x_2-x_1)/(1-u)^2
$.
Write $\tilde f_{\rho}(u,r)=f_{\rho}((x_1,y_1(u,r)),(x_2,y_2(u,r)))$.
By symmetry, we can consider only the case $r>0$, $u\in(0,1)$. Indeed,
considering the case $r>0$ means that we restrict ourselves to segments
of the majorant with positive slope. By a change of variables formula,
\begin{eqnarray*}
I_{\alpha}&=&4\alpha^2\int_0^\infty
\int_0^1\int_X \E
\bigl[\tilde f_{\rho}(u,r)\bigr]
\\
&&\hspace*{66.2pt}{}\times r^{-2\alpha-1} \biggl(\frac{1}{1-u}-x_1
\biggr)^{-\alpha
-1} \biggl(\frac{1}{1-u}-x_2
\biggr)^{-\alpha-1}\\
&&\hspace*{66.2pt}{}\times\frac
{x_2-x_1}{(1-u)^2} \,dx_1\,dx_2\,du\,dr.
\end{eqnarray*}
Further, by definition of the Poisson process,
%
%
\begin{eqnarray}
\label{0024} \E\bigl[\tilde f_{\rho}(u,r)\bigr] &=& \exp\biggl(-\int
_{\{(x,y)\in E \dvtx y\geq-rx+{r}/({1-u})\}} \alpha y^{-(\alpha+1)}\,
dy\,dx \biggr)
\nonumber\\
&=& \exp\biggl(-\int_{0}^1 \biggl(-rx+
\frac{r}{1-u} \biggr)^{-\alpha
}\,dx \biggr)
\\
&=& \exp\biggl(-\frac{r^{-\alpha}}{(1-\alpha)}
\frac{1-u^{1-\alpha
}}{(1-u)^{1-\alpha}} \biggr).
\nonumber
\end{eqnarray}
The integral $J:=\int_X(c-x_1)^\beta(c-x_2)^\beta(x_2-x_1)\,dx_1\,dx_2$,
where $c>1$, can be evaluated by writing $(x_2-x_1)=(c-x_1)-(c-x_2)$.
We obtain
%
%
\begin{equation}
\label{0317} J= \cases{ \dfrac{c^{2\beta+3}-(c-1)^{2\beta+3}-(2\beta
+3)c^{\beta
+1}(c-1)^{\beta+1}}{(\beta+1)(\beta+2)(2\beta+3)}, \vspace*{2pt}\cr
\hspace*{141pt}\quad \mbox{if $\beta\ne
-1,-3/2,-2$},
\vspace*{2pt}\cr
-4\ln
\biggl(\dfrac{c}{c-1} \biggr)+\dfrac4{\sqrt{c(c-1)}},
\qquad \mbox{if $\beta=-3/2$}.}
\end{equation}
%
In the case $\alpha\neq1/2$, we apply (\ref{0024}) and (\ref{0317})
to obtain
\begin{eqnarray*}
I_{\alpha}&=&\frac{4\alpha}{(1-\alpha)(2\alpha-1)}\\
&&\hspace*{0pt}{}\times\int_0^\infty
\int_0^1r^{-2\alpha-1}\exp\biggl(-
\frac{r^{-\alpha}}{(1-\alpha
)}\frac{1-u^{1-\alpha}}{(1-u)^{1-\alpha}} \biggr)
\\
&&\hspace*{46.4pt}{}\times(1-u)^{2\alpha-3} \bigl[1-u^{1-2\alpha}-(1-2\alpha
)u^{-\alpha}(1-u) \bigr] \,du\,dr.
\end{eqnarray*}
In the case $\alpha=1/2$ we get, combining (\ref{0024}) with (\ref{0317}),
\[
I_{\alpha}=4\int_0^\infty\int
_0^1r^{-2}\exp
\biggl(-2r^{-1/2}\frac
{1-u^{1/2}}{(1-u)^{1/2}} \biggr) (1-u)^{-2}
\bigl[u^{-1/2}(1-u)+\ln u \bigr] \,du\,dr.
\]
Applying in both cases the formula $\int_0^\infty r^{-2\alpha
-1}e^{-cr^{-\alpha}} \,dr=(c^2\alpha)^{-1}$, we arrive at 
%
%
\begin{equation}
\label{eqexpLalphaint} \E L_{\alpha}= \cases{\displaystyle  2+\frac{2(1-\alpha
)}{(2\alpha-1)}\int
_0^1 \frac{1-u^{1-2\alpha}-(1-2\alpha)u^{-\alpha
}(1-u)}{(1-u)(1-u^{1-\alpha})^{2}} \,du, \vspace*{3pt}\cr
\qquad\hspace*{142.5pt}\mbox{if $\alpha
\neq1/2$},
\vspace*{3pt}\cr
\displaystyle 2+\int_0^1\frac{u^{-1/2}(1-u)+\ln u}{(1-u)(1-u^{1/2})^2} \,du,
\qquad \mbox{if $\alpha=1/2$}.}
\end{equation}

%
\begin{remark}
The second line is just the limit of the first line as $\alpha\to
1/2$, so that $\E L_{\alpha}$ depends on $\alpha$ continuously. If
$\alpha=p/q\neq1/2$ is rational, then the substitution $v=u^{1/q}$
reduces the integral in (\ref{eqexpLalphaint}) to an integral of a
rational function which can be computed in closed form; see the table
in Section~\ref{subsecpropmaj}. Numerical computation suggests that
$\E L_{\alpha}$ is increasing in $\alpha\in(0,1)$.
\end{remark}
In the rest of the proof we compute the integral on the right-hand side
of (\ref{eqexpLalphaint}) in terms of the Barnes modular constant. Let
\[
K_{\alpha}=\int_0^1 \frac{1-u^{1-2\alpha}-(1-2\alpha)u^{-\alpha
}(1-u)}{(1-u)(1-u^{1-\alpha})^{2}}
\,du.
\]
%
Write $\beta=1-\alpha$.
Recall that $\psi(z)=\Gamma'(z)/\Gamma(z)$ is the logarithmic
derivative of the Gamma function. Using the geometric series $\frac1
{1-u}=\sum_{n=0}^{\infty}u^n$ and the formula $\psi(z)=-\gamma
-\frac1 z+\sum_{n=1}^{\infty} (\frac1 n-\frac1 {z+n})$ (see
\cite{bateman}, Section 1.7) we obtain that for every $m>0$,
\begin{eqnarray*}
\int_{0}^1 u^{m\beta}\frac{1-u^{1-2\alpha}}{1-u}
\,du &=& \int_{0}^1 \sum
_{n=0}^{\infty} u^{n+m\beta}\bigl(1-u^{1-2\alpha}
\bigr) \,du
\\
&=& \sum_{n=1}^{\infty} \biggl(\frac1{n+m
\beta}-\frac{1}{n+(m+2)\beta
} \biggr) -\frac{1}{(m+2)\beta}
\\
&=& \psi\bigl((m+2)\beta\bigr)-\psi(m\beta)-\frac1 {m
\beta}.
\end{eqnarray*}
For $m=0$ the value of the integral is $\psi(2\beta)+\gamma$, where
$\gamma=-\psi(1)$ is the Euler--Mascheroni constant; see
\cite{bateman}, Section 1.7.2.
Using the expansion $\frac1{(1-u)^2}=\sum_{m=0}^{\infty}(m+1)u^m$ we
obtain that $K_{\alpha}=\lim_{N\to\infty}S_N$, where
\begin{eqnarray*}
S_N &=& \sum_{m=1}^{N}(m+1)
\biggl(\psi\bigl((m+2)\beta\bigr)-\psi(m\beta)-\frac
{1}{m\beta} \biggr)\\
&&{}-(N+1)
\frac{1-2\alpha}{1-\alpha}+\psi(2\beta)+\gamma
\\
&=& -2\sum_{m=1}^{N}\psi(m\beta)+(N+1)
\psi\bigl((N+2)\beta\bigr)+N\psi\bigl((N+1)\beta\bigr)-2N
\\
&&{}-\sum_{m=1}^N\frac1{m\beta}+\gamma-
\frac{1-2\alpha}{1-\alpha}.
\end{eqnarray*}
The second equality follows by an elementary transformation of the
telescopic sum.
Using the asymptotic expansion $\psi(z)=\log z-\frac1 {2z}+o(\frac
1z)$ as $z\to\infty$, we obtain
\[
S_N=-2\sum_{m=1}^{N}\psi(m
\beta)+(2N+1)\log(\beta N)-2N-\frac1{\beta}\log N+1-\frac{\alpha\gamma
}{1-\alpha}+o(1).
\]
Comparing this with (\ref{eqbarnesmodularconst}) yields
\[
K_\alpha=1-2C(1-\alpha)+\frac{\log(1-\alpha)}{1-\alpha}-\frac
{\alpha\gamma}{1-\alpha}.
\]
The proof of Theorem~\ref{theointensity} is completed by inserting
this into (\ref{eqexpLalphaint}).

\subsection{\texorpdfstring{Proof of Theorem \protect\ref{theoprobabtwoseg}}{Proof of Theorem 1.9}}
We prove that $\P[L_{\alpha}=2]=1-\alpha$. For a point $P\in
E=[0,1]\times(0,\infty)$ let $g_{\rho}(P)$ be the indicator of the
following event: there are no atoms of $\rho$ above the lines joining
$P$ to the points $(0,0)$ and $(1,0)$. Then
\[
\P[L_{\alpha}=2]=\E\biggl[\sum_{P\in\supp\rho}
g_{\rho
}(P) \biggr].
\]
By the Slivnyak--Mecke formula \cite{schneiderweilbook}, Corollary 3.2.3,
%
%
\begin{equation}
\label{eqNalpha2sliv} \P[L_{\alpha}=2]=\int_E \E
\bigl[ g_{\rho}(P)\bigr] \nu(dP)= \alpha\int_{0}^1
\int_{0}^{\infty} \E\bigl[g_{\rho}(x,y)\bigr]
y^{-(\alpha+1)} \,dy\,dx.\hspace*{-20pt}
\end{equation}
The intensity of the Poisson process $\rho$ integrated over the set $\{
(u,v)\in E \dvtx u\in[0,x], v>yu/x\}$ is
\[
\int_{0}^x \int_{yu/x}^{\infty}
\alpha v^{-(\alpha+1)} \,du\,dv=\int_0^x
\biggl(\frac{yu}{x} \biggr)^{-\alpha} \,du=\frac{1}{1-\alpha}
xy^{-\alpha}.
\]
By symmetry, the intensity of $\rho$ integrated over the set $\{
(u,v)\in E \dvtx u\in[x,1],v>y(u-1)/(x-1)\}$ is $\frac{1}{1-\alpha}
(1-x)y^{-\alpha}$. It follows that
\[
\E\bigl[g_{\rho}(x,y)\bigr]=\exp\biggl(-\frac{1}{(1-\alpha)y^{\alpha}
} \biggr).
\]
Inserting this into (\ref{eqNalpha2sliv}) we obtain $\P[L_{\alpha
}=2]=1-\alpha$.


%

\printaddresses

\end{document}